\newtheorem{thm}{Theorem}[section]
\newtheorem{cor}[thm]{Corollary}
\newtheorem{lem}[thm]{Lemma}
\newtheorem{example}[thm]{Example}
\theoremstyle{definition}
\theoremstyle{remark}
\numberwithin{equation}{section}
\newcommand{\ord}{\textup{ord}\,}
\newcommand{\res}{\textup{res}\,}
\begin{document}

\title[Diophantine approximation of polynomials over \text{$\mathbb{F}_q[t]$}]{Diophantine approximation of polynomials over $\mathbb{F}_q[t]$ \\ satisfying a divisibility condition }

\author{Shuntaro Yamagishi}
\address{Department of Pure Mathematics \\
University of Waterloo \\
Waterloo, ON\\  N2L 3G1 \\
Canada}
\email{syamagis@uwaterloo.ca}
\indent


\begin{abstract}
Let $\mathbb{F}_q[t]$ denote the ring of polynomials over $\mathbb{F}_q$, the finite field of $q$ elements.
We prove an estimate for fractional parts of polynomials over $\mathbb{F}_q[t]$
satisfying a certain divisibility condition analogous to that of intersective polynomials
in the case of integers. We then extend our result to consider linear combinations
of such polynomials as well.
\end{abstract}

\subjclass[2010]
{11J54, 11T55}

\keywords{fractional parts of polynomials; function field; intersective polynomials}

\maketitle

\section{Introduction}
In 1927, Vinogradov \cite{V} proved the following result, confirming a conjecture of Hardy and Littlewood
\cite{HL}. Let $\| \cdot \|$ denote the distance to the nearest integer.
\begin{thm} \label{thm vinogradov}
For every positive integer $k$, there exists an exponent $\theta_k > 0$
such that
$$
\min_{1 \leq n \leq N} \| \alpha n^k \| \ll _k N^{- \theta_k}
$$
for any positive integer $N$ and real number $\alpha$.
\end{thm}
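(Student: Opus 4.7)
The argument is a classical combination of Dirichlet's approximation theorem with Weyl's inequality for exponential sums, with the passage from cancellation in a single exponential sum to the existence of small fractional parts carried out via the Erd\H{o}s--Tur\'an inequality.

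First I would apply Dirichlet's theorem with a parameter $Q$ of order $N^{k-1}$: there exist coprime integers $a,q$ with $1 \leq q \leq Q$ and $|\alpha - a/q| \leq 1/(qQ)$. I would then split the argument according to the size of $q$. In the \emph{major arc case} ($q$ small, say $q \leq N^{1-\eta}$ for a small $\eta > 0$ depending on $k$), I would simply take $n=q$, which is at most $N$, and compute directly
\begin{equation*}
\alpha q^k \;=\; aq^{k-1} \;+\; (\alpha - a/q)\,q^k,
\qquad
\|\alpha q^k\| \;\leq\; \tfrac{q^{k-1}}{Q} \;\leq\; N^{-\eta(k-1)}.
\end{equation*}

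In the \emph{minor arc case} ($q$ large), the direct construction is unavailable, so I would instead exploit cancellation in the exponential sum. Weyl's inequality yields
\begin{equation*}
|S(\alpha,N)| \;:=\; \Big|\sum_{n=1}^{N} e(\alpha n^k)\Big|
\;\ll_k\; N^{1+\epsilon}\Big(\tfrac{1}{q} + \tfrac{1}{N} + \tfrac{q}{N^k}\Big)^{2^{1-k}}
\;\ll\; N^{1 - 2^{1-k} + \epsilon}.
\end{equation*}
To convert this into information about fractional parts, I would invoke the Erd\H{o}s--Tur\'an inequality
\begin{equation*}
\#\{n \leq N : \|\alpha n^k\| \leq \delta\}
\;\geq\; 2\delta N \;-\; C\Bigl(\tfrac{N}{H} + \sum_{h=1}^{H} \tfrac{|S(h\alpha,N)|}{h}\Bigr),
\end{equation*}
setting $\delta = N^{-\theta}$ and $H$ a small power of $N$, and then showing that Weyl-type bounds for each $|S(h\alpha,N)|$ force the right-hand side to be strictly positive. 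Balancing the parameters $Q$, $H$, and $\delta$ will then produce some admissible $\theta_k > 0$.

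The main obstacle I anticipate is the uniform control of $|S(h\alpha, N)|$ over $h \leq H$: the continued-fraction convergents of $h\alpha$ depend sensitively on $h$, and individual $h$ can promote $h\alpha$ into the major arc regime, where Weyl's inequality gives nothing nontrivial. I would handle this by splitting the sum over $h$ into dyadic ranges and separately bounding the contribution of those $h$ for which $h\alpha$ remains minor-arc versus those for which it becomes major-arc (the latter being few and hence acceptable after weighting by $1/h$). With this split, the Erd\H{o}s--Tur\'an sum is controlled and the two cases combine to yield the theorem with some positive exponent $\theta_k$ depending only on $k$.
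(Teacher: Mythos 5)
This is Vinogradov's theorem, which the paper cites from \cite{V} and does not prove, so there is no internal argument to compare against. Your overall framework --- Dirichlet, Weyl, and a counting inequality converting exponential-sum cancellation into small fractional parts --- is the classical route, and the major-arc construction $n=q$ with $\|\alpha q^k\| \leq q^{k-1}/Q$ is correct.

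The gap is in your treatment of the Erd\H{o}s--Tur\'an sum $\sum_{h\le H}|S(h\alpha,N)|/h$. You propose to separate $h$ according to whether $h\alpha$ is minor- or major-arc and to discard the latter as ``few and hence acceptable after weighting by $1/h$.'' Neither half of this is justified. There is no cardinality bound available here: $\alpha$ being minor-arc does not prevent $h\alpha$ from admitting a small-denominator approximation, and depending on the arithmetic of $\alpha$ the exceptional set of $h$ can be large. More seriously, the $1/h$ weight does not help even for a single small exceptional $h$: say $h=2$ with $2\alpha$ major-arc contributes up to $N/2$ to the sum, which already overwhelms the main term $2\delta N$. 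What saves the argument is not a count but a dichotomy: if some $h\le H$ has $h\alpha$ within $1/(rR)$ of a fraction $b/r$ with $r$ small and $R$ large, then $|\alpha - b/(hr)|\le 1/(hrR)$, and setting $n = hr$ (which is $\le N$ for suitably balanced parameters) recovers the major-arc construction directly; if no such $h$ exists, Weyl controls every $S(h\alpha,N)$ uniformly and Erd\H{o}s--Tur\'an closes the proof. A cleaner variant --- and notably the structure this paper follows for its own $\mathbb{F}_q[t]$ analogues --- is to argue by contradiction and apply Weyl only once: assume $\|\alpha n^k\|>\delta$ for all $n\le N$, sum a nonnegative Fej\'er kernel over the points $\alpha n^k$ to extract a \emph{single} $h$ with $1\le h<H$ and $|S(h\alpha,N)|\gg N/H$ (this is exactly the role of Lemma~\ref{first lemma}), then apply Weyl's inequality (the role of Corollary~\ref{Cor LL main}) to that one $h$ to obtain a denominator $q$, and take $n=hq$ to force a contradiction. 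Because Weyl is invoked for only one $h$, the uniformity issue you flagged never arises, and I would encourage you to restructure your minor-arc argument along these lines.
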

A brief history and introduction to the topic is given in \cite[Section 1]{LS}, which we paraphrase here.
Vinogradov showed that one could take $\theta_k = \frac{k}{k 2^{k-1} + 1 } - \epsilon$ for any
$\epsilon > 0$. In particular, one can take $\theta_2 = 2/5 - \epsilon$. Heilbronn \cite{H} improved
this to $\theta_2 = 1/2 - \epsilon$. The best result to date is due to Zaharescu \cite{Z}, who showed
we can take  $\theta_2 = 4/7 - \epsilon$, though his method is not applicable to higher powers.
It is an open conjecture that we can choose $\theta_2$ (and more generally $\theta_k$) to be $1 - \epsilon$.

Natural generalizations of Vinogradov's result have been made. Davenport \cite{D} obtained
an analogue of Theorem \ref{thm vinogradov} when $n^k$ is replaced by a polynomial
$f(n)$ of degree $k$ without a constant term (the corresponding bound being uniform in the coefficients
of $f$ and depending only on $k$). Notably, the best bound is due to Wooley, who showed that we can
choose $\theta_k = \frac{1}{4k(k-2)} - \epsilon$ for $k \geq 4$, as a consequence of his recent breakthrough
\cite{W} on Vinogradov's mean value theorem. We note that Vinogradov's result has also been generalized to
simultaneous approximation, where we consider multiple polynomials at once. However, we focus
on the single polynomial case in this paper and we refer the reader to \cite[Section 1]{LS} for more information on
simultaneous approximation.

In contrast, L\^{e} and Spencer put more emphasis on the qualitative side of these problems in \cite{LS}.
They were interested in generalizing Theorem \ref{thm vinogradov} in the following manner. For instance,
is it possible to replace $n^k$ in Theorem \ref{thm vinogradov} with a polynomial $h \in \mathbb{Z}[x]$?
That is, for which polynomials $h \in \mathbb{Z}[x]$ do we have
$$
\min_{1 \leq n \leq N} \| \alpha h(n) \| \ll_h N^{-\theta}
$$
for some $\theta = \theta(h)$, uniformly in $\alpha$ and $N$?
By the result of Davenport \cite{D} mentioned in the previous paragraph, this is the case if $h$ is without a constant term,
but apparently these are not all the polynomials satisfying this property.
By considering $\alpha = 1/q$, we see that in order for such a bound to exist,
$h$ must have a root modulo $q$ for every $q \in \mathbb{Z}^+$.
Clearly, this condition is satisfied by polynomials without constant terms.
L\^{e} and Spencer proved that this condition is also sufficient. 
\begin{thm} \cite[Theorem 3]{LS} \label{thm LS 1}
Let $h$ be a polynomial in $\mathbb{Z}[x]$ with the property that for
every $q \not = 0$, there exists $n_q \in \mathbb{Z}$, $0 \leq n_q < q$, such that $h(n_q) \equiv 0 \ (\text{mod } q )$.
Then there is an exponent $\theta > 0$ depending only on the degree of $h$
such that
$$
\min_{1 \leq n \leq N} \| \alpha h(n) \| \ll_h N^{- \theta}
$$
for any positive integer $N$ and real number $\alpha$.
\end{thm}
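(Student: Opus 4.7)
The plan is to combine a direct construction from the divisibility hypothesis (when $\alpha$ is well-approximated by a rational with small denominator) with a Fourier-analytic / Weyl exponential sum argument (when it is not). Let $k=\deg h$. I would begin by applying Dirichlet's approximation theorem to $\alpha$: there exist coprime integers $a,q$ with $1\le q\le Q:=N^{k-1}$ and $|\alpha-a/q|\le 1/(qQ)$. Fix a small threshold $\eta\in(0,1)$ to be chosen later and split into the cases $q\le N^{\eta}$ and $q>N^{\eta}$.

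In the first case ($q\le N^{\eta}$), the divisibility hypothesis supplies $n_q\in\{0,1,\ldots,q-1\}$ with $h(n_q)\equiv 0\pmod q$; replacing $n_q$ by $q$ if $n_q=0$ (which is harmless since $h(q)\equiv h(0)\pmod q$), we may take $1\le n_q\le N$. Writing $\alpha h(n_q)=(a/q)h(n_q)+(\alpha-a/q)h(n_q)$ and noting that the first summand is an integer, one obtains
\[
\|\alpha h(n_q)\|\le |\alpha-a/q|\cdot|h(n_q)|\ll q^{k-1}/Q\le N^{-(k-1)(1-\eta)},
\]
which is of the desired form $N^{-\theta}$ with $\theta$ close to $(k-1)(1-\eta)$.

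In the second case ($q>N^{\eta}$), I would use a Fourier minorant argument to reduce the task of producing one $n\le N$ with $\|\alpha h(n)\|\le N^{-\theta}$ to bounds on the exponential sums $S_m(\alpha):=\sum_{n\le N}e(m\alpha h(n))$ for $1\le|m|\le M:=N^{\theta}$. Concretely, taking $F$ a nonnegative trigonometric polynomial of degree $\le M$ satisfying $F(x)\le \mathbf{1}_{\|x\|\le N^{-\theta}}(x)$ and $\int F\gg N^{-\theta}$, a strict positivity of
\[
\sum_{n\le N}F(\alpha h(n)) = N\hat F(0)+\sum_{0<|m|\le M}\hat F(m)\,S_m(\alpha)
\]
forces the existence of some $n$ with $\|\alpha h(n)\|\le N^{-\theta}$. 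Weyl's inequality (or, for $k\ge 4$, Wooley's refinement via Vinogradov's mean value theorem) bounds $|S_m(\alpha)|\ll N^{1-\sigma}$ for some $\sigma=\sigma(k)>0$, using that $q>N^{\eta}$ keeps the rational approximation of $m a_k\alpha$ (where $a_k$ is the leading coefficient of $h$) in the minor-arc range roughly $[N,N^{k-1}]$ for all $m\le M$, provided $M$ is sufficiently smaller than the margin in $q$.

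The main obstacle is the parameter balancing: $\eta$, $\theta$, and $M$ must be chosen consistently so that (i) the small-$q$ case actually yields a positive exponent $\theta$, (ii) the Fourier main term $N\hat F(0)\asymp N^{1-\theta}$ strictly dominates the error $\ll M\cdot N^{1-\sigma}$, and (iii) passing from $\alpha$ to $m\alpha$ does not shrink the effective denominator below $N$ due to a possible $\gcd(q,ma_k)$ factor. A secondary technicality is that Weyl's inequality is traditionally stated for polynomials without a constant term, but factoring out the global phase $e(m\alpha h(0))$ shows that Weyl's bound transfers to $h$ directly.
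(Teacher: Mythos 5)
This theorem is quoted from \cite{LS} and not proved in the paper; the relevant comparison is with the paper's proof of its function-field analogue, Theorem \ref{first theorem}, which follows the same template as \cite[Theorem 3]{LS}. Your "case 1" is correct as far as it goes, and the Fourier minorant reduction to bounds on $S_m(\alpha)$ is the right tool for the complementary range. But the dichotomy as you have set it up --- a single application of Dirichlet to $\alpha$ with parameter $Q=N^{k-1}$, followed by a split on the size of that particular $q$ --- does not close, and the hole is exactly your obstacle (iii), which is not a matter of parameter balancing. Two concrete problems. First, the inherited approximation $|ma_k\alpha-ma_ka/q|\le ma_k/(qQ)$ is of quality $1/r_m^2$ for the reduced denominator $r_m=q/\gcd(q,ma_k)$ only when roughly $ma_k\le Q/q$; for $q$ near $Q$ this fails for every $m\ge 2$, so Weyl's hypothesis is simply not available. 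Second, and more fundamentally, "Dirichlet returned some $q>N^{\eta}$" does not preclude $ma_k\alpha$ (or $\alpha$ itself) from having a far better approximation with a much smaller denominator; the size of one Dirichlet denominator for $\alpha$ is not the quantity that governs whether the $S_m(\alpha)$ are in the minor arcs. In short, the intermediate range is not covered by either of your two cases as written.

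There are two standard repairs. One is to reformulate the dichotomy: let case 1 be "there \emph{exists} $a/q$ with $q\le N^{\eta}$ and $|\alpha-a/q|\le 1/(qN^{k-1})$," and in case 2 apply Dirichlet separately to each $ma_k\alpha$; the case-2 hypothesis then forces every resulting denominator to exceed $N^{\eta}/(ma_k)\ge N^{\eta-\theta}/a_k$, with quality $1/(r_mN^{k-1})$, so Weyl applies to each $S_m$ with a saving depending on $\eta$ and $k$. The other, which is the route of \cite{LS} and is mirrored in Section \ref{proving} here, is to argue by contradiction: if $\|\alpha h(n)\|>N^{-\theta}$ for all $n\le N$, the Fourier step produces a \emph{single} $y\le N^{\theta}$ with $|S_y(\alpha)|\gg N^{1-\theta}$, and the \emph{converse} of Weyl's inequality (the integer analogue of Corollary \ref{Cor LL main}) then yields $r\le N^{O(\theta)}$ with $\|rya_k\alpha\|\le N^{-k+O(\theta)}$; taking $n\equiv n_{rya_k}\ (\mathrm{mod}\ rya_k)$ with $1\le n\le rya_k\le N$ gives $\|\alpha h(n)\|\le |h(n)/(rya_k)|\cdot\|rya_k\alpha\|\le N^{-1+O(\theta)}<N^{-\theta}$, a contradiction. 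The essential ingredient your sketch is missing is this converse direction, whose output --- an approximation of quality $N^{-k+o(1)}$ with denominator $N^{o(1)}$ --- is vastly stronger than what Dirichlet supplies and is what makes the construction from the intersective hypothesis succeed uniformly in $\alpha$. As a minor point, a trigonometric polynomial cannot be both nonnegative and a minorant of an indicator function; you want the Selberg minorant (not nonnegative) or, more simply, the orthogonality statement analogous to Lemma \ref{first lemma}.
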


Our goal in this paper is to consider analogous problems of qualitative nature over $\mathbb{F}_q[t]$, where $\mathbb{F}_q$ is
a finite field of $q$ elements, taking the approach of L\^{e} and Spencer in \cite{LS}.
However, before we can state our results we need to introduce notation, some of which
we take from the material in \cite[Section 1]{LL}.
We denote the characteristic of $\mathbb{F}_q$, a positive prime number, by ch$(\mathbb{F}_q) = p$.
Let $\mathbb{K} = \mathbb{F}_q(t)$ be the field of fractions of the polynomial ring $\mathbb{F}_q[t]$.
For $f/g \in \mathbb{K}$, we define the norm $| f/g | = q^{\deg f - \deg g}$ (with the convention that $\deg 0 = - \infty$).
The completion of $\mathbb{K}$ with respect to this norm is $\mathbb{K}_{\infty} = \mathbb{F}_q((1/t))$,
the field of formal Laurent series in $1/t$. In other words, every element $\alpha \in \mathbb{K}_{\infty}$
can be written as $\alpha = \sum_{i= - \infty}^{n} a_i t^i$ for some $n \in \mathbb{Z}$ and $a_i \in \mathbb{F}_q \ (i \leq n)$.
Therefore, $\mathbb{F}_q[t], \mathbb{K}$, and $\mathbb{K}_{\infty}$ play the roles of $\mathbb{Z}, \mathbb{Q}$, and $\mathbb{R}$,
respectively. Let
$$
\mathbb{T} = \left\{   \sum_{i= - \infty}^{-1} a_i t^i  : a_i \in \mathbb{F}_q \ (i \leq -1) \right\},
$$
which is the analogue of the unit interval $[0,1)$. 

For $\alpha = \sum_{i= - \infty}^{n} a_i t^i \in \mathbb{K}_{\infty}$, if $a_n \not = 0$, we define
$\ord \alpha = n.$ We say $\alpha$ is \textit{rational} if  $\alpha \in \mathbb{K}$ and
\textit{irrational} if  $\alpha \not \in \mathbb{K}$. We define $\{ \alpha \} = \sum_{i= - \infty}^{-1} a_i t^i
\in \mathbb{T}$ to be the \textit{fractional} part of $\alpha$.
We refer to $a_{-1}$ as the \textit{residue} of $\alpha$, denoted by $\res \alpha$.
We now define the exponential function on $\mathbb{K}_{\infty}$. Let
$\text{tr}: \mathbb{F}_q \rightarrow \mathbb{F}_p$ denote the familiar trace map.
There is a non-trivial additive character $e_q: \mathbb{F}_q \rightarrow \mathbb{C}^{\times}$
defined for each $a \in \mathbb{F}_q$ by taking $e_q(a) = e^{ 2 \pi i (\text{tr}(a)/p) }$.
This character induces a map $e : \mathbb{K}_{\infty} \rightarrow \mathbb{C}^{\times}$
by defining, for each element $\alpha \in \mathbb{K}_{\infty}$, the value of $e(\alpha)$
to be $e_q(\res \alpha)$. For $N \in \mathbb{Z}^+$, we write $\mathbb{G}_N$ for the
set of all polynomials in $\mathbb{F}_q[t]$ whose degree are less than $N$.

Given $j,r \in \mathbb{Z}^+$, we write $j \preceq_p r$ if $p \nmid {r \choose j}$.
By Lucas' Theorem, this happens precisely when all the digits of $j$ in base $p$
are less than or equal to the corresponding digits of $r$. From this characterization,
it is easy to see that the relation $\preceq_p$ defines a partial order on $\mathbb{Z}^+$.
If $j \preceq_p r$, then we necessarily have $j \leq r$. Let $\mathcal{K} \subseteq \mathbb{Z}^+$.
We say an element $k \in \mathcal{K}$  is \textit{maximal} if it is maximal with respect to
$\preceq_p$, that is, for any $r \in \mathcal{K}$, either $r \preceq_p k$
or $r$ and $k$ are not comparable. Following the notation of \cite{LL}, we define
the \textit{shadow} of $\mathcal{K}$, $\mathcal{S(K)}$, to be
$$
\mathcal{S(K)} = \left\{  j \in \mathbb{Z}^+ : 
j \preceq_p r \text{ for some } r \in \mathcal{K}  \right\}.
$$
We also define
$$
\mathcal{K}^* = \left\{ k \in \mathcal{K} : p \nmid k \text{ and }
p^vk \not \in \mathcal{S(K)} \text{ for any } v \in \mathbb{Z}^+   \right\}.
$$

Given $f(u) \in \mathbb{K}_{\infty}[u]$, we mean by $f(u)$ is \textit{supported on a set} $\mathcal{K} \subseteq \mathbb{Z}^+$
that $f(u) = \sum_{r \in \mathcal{K} \cup \{ 0 \}} \alpha_r u^r$, where $0 \not = \alpha_r \in \mathbb{K}_{\infty} \ (r \in \mathcal{K} )$.
As explained in the remark of \cite[Theorem 12]{LL}, the non-zero coefficient $\alpha_k$, for $k \in \mathcal{K}^*$ which is maximal in $\mathcal{K}$, plays the role of the leading coefficient of the polynomial. This is, in a sense,
the  \textquotedblleft  true"
$\mathbb{F}_q[t]$ analogue of the leading coefficient.

\begin{example}
\label{example one}
Let $p > 3$. The polynomial $f_1 (u) = c_{2p^2+p} u^{2p^2 + p}+ c_{3p+1} u^{3p+1} + c_p u^p + c_2 u^2 + c_1 u^1 + c_0$,
where each $c_j \not = 0$, 
is supported on $\mathcal{K}_1 = \{2p^2+p, 3p+1, p, 2, 1 \}$.
We can verify that
$$
\mathcal{S} (\mathcal{K}_1) = \left\{  2p^2 + p, 2p^2, p^2 + p, p^2,  p, 3p+1, 2p+1, p+1, 3p, 2p, 2,1 \right\}
$$
and
$$
\mathcal{K}_1^* = \left\{ 3p + 1 \right\}.
$$
\end{example}

We are now in position to state one of our main results.
The following theorem is an analogue of Theorem \ref{thm LS 1}.
\begin{thm}
\label{first theorem}
Let $h(u) = \sum_{r \in \mathcal{K} \cup \{ 0 \}} c_r u^r$ be a polynomial supported on a set $\mathcal{K} \subseteq \mathbb{Z}^+$
with coefficients in $\mathbb{F}_{q}[t]$. Suppose $c_k \not = 0$ for some $k \in \mathcal{K}^*$.
Suppose further that for every $g$ in $\mathbb{F}_q[t] \backslash \{ 0\}$, there exists an $m_g \in \mathbb{G}_{\deg g}$ such that $h(m_g) \equiv 0 \ (\text{mod }g).$ Then there exist $\theta = \theta(\mathcal{K}, q, \deg h )  > 0$ 
and $N_0 = N_0( \mathcal{K}, q, h, \theta )  \in \mathbb{Z}^+$ 
such that for any $N > N_0$, we have
$$
\min_{x \in \mathbb{G}_N} \ord \{ \beta h(x) \} \leq - \theta N
$$
uniformly in $\beta \in \mathbb{K}_{\infty}$.
\end{thm}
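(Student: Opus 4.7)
The plan is to adapt the L\^e--Spencer circle method argument for Theorem \ref{thm LS 1} to the function-field setting, combining a Weyl-type exponential-sum estimate for polynomials supported on $\mathcal{K}$ with the intersective hypothesis on $h$. I would first reduce the fractional-part statement to exponential-sum bounds: by a standard Fourier expansion on $\mathbb{T}$ (writing the indicator of the small ball $\{\alpha\in\mathbb{T}:\ord\alpha\leq-\theta N\}$ as a combination of characters $\alpha\mapsto e(n\alpha)$ for $n$ of degree at most roughly $\theta N$), the theorem follows from a nontrivial bound
\[
\Bigl|\sum_{x\in\mathbb{G}_N} e\bigl(n\beta\, h(x)\bigr)\Bigr| \leq q^{N(1-\delta)}
\]
for each such nonzero $n$, with $\delta>0$ depending on $\mathcal{K}$ and $q$. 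Applying Dirichlet's theorem in $\mathbb{K}_\infty$ to $n\beta$ yields a coprime approximation $n\beta=a/g+\gamma$, which produces the standard minor-arc/major-arc dichotomy according to the size of $\deg g$ and $|\gamma|$.

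On the minor arcs I would invoke a Weyl-type inequality for polynomials supported on $\mathcal{K}$, in the spirit of \cite{LL}. Here the hypothesis that $c_k\neq 0$ for some maximal $k\in\mathcal{K}^\ast$ is essential: in characteristic $p$, the repeated finite differencing underlying Weyl's method can collapse unless the ``true'' leading coefficient lives on $\mathcal{K}^\ast$, which is precisely the point of the remark preceding Example \ref{example one}.

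On the major arcs the intersective hypothesis comes into play. Choose $m_g\in\mathbb{G}_{\deg g}$ with $h(m_g)\equiv 0\pmod g$; expanding $h(m_g+gy)$ and using $g\mid h(m_g)$, one writes $h(m_g+gy)=g\,H(y)$ for some $H\in\mathbb{F}_q[t][y]$. Reparametrising $x=m_g+gy$ with $y\in\mathbb{G}_{N-\deg g}$ and using that $aH(y)\in\mathbb{F}_q[t]$ contributes nothing to the fractional part, one obtains
\[
\{\beta h(x)\}=\{g\gamma\, H(y)\},
\]
reducing the problem to the pair $(g\gamma,H)$, where $|g\gamma|$ is much smaller than $|\beta|$. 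If $|g\gamma|$ is sufficiently small the trivial bound on $|g\gamma H(y)|$ already closes the case; otherwise one iterates (reapplying Dirichlet and Weyl to $g\gamma$ against $H$), with each round either terminating or strictly reducing $|\gamma|$.

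The main obstacle will be twofold: first, establishing the Weyl-type inequality with the correct dependence on $\mathcal{K}^\ast$ while dodging the characteristic-$p$ differencing collapses (which is where the maximality hypothesis does its real work, and will likely require, following \cite{LL}, differencing restricted to shifts that respect the shadow structure $\mathcal{S(K)}$); and second, making the major-arc reduction rigorous --- verifying that the substitution $x\mapsto m_g+gy$ produces a polynomial $H$ whose support and leading coefficient still permit a Weyl bound on the reduced scale $N-\deg g$, and that the iteration terminates after a bounded (in terms of $\mathcal{K}$, $q$, $\deg h$) number of steps so the total loss stays absorbed into the exponent $\theta$.
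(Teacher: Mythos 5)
Your opening reduction (no small fractional part $\Rightarrow$ a large exponential sum $\sum_{x\in\mathbb{G}_N}e(y\beta h(x))$ for some small nonzero $y$, then a Weyl-type input from \cite{LL} exploiting the coefficient on $\mathcal{K}^*$) is exactly the paper's first half: this is Lemma \ref{first lemma} followed by Corollary \ref{Cor LL main}. But from that point on you take a different, and in this setting broken, route. The paper does \emph{not} run a Dirichlet/major-arc/minor-arc dichotomy with an iterated substitution. Instead, Corollary \ref{Cor LL main} already outputs the rational approximation: a $g$ with $\ord g<CM$ and $\ord\{gy\beta c_k\}\leq -kN+CM$. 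One then invokes the divisibility hypothesis \emph{once}, with modulus $gyc_k$ (of degree $O(M)=O(\theta N)$), to produce $x\in\mathbb{G}_{\ord(gyc_k)}$ with $gyc_k\mid h(x)$; writing $\beta h(x)-z\,h(x)/(gyc_k)=(h(x)/(gyc_k))(gyc_k\beta-z)$ with $h(x)/(gyc_k)\in\mathbb{F}_q[t]$ of degree $O(\theta N)$ gives $\ord\{\beta h(x)\}\leq -kN+O(\theta N)\leq -M$ directly, contradicting the assumption. No reparametrisation, no iteration.

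The step you flag as an "obstacle" --- that $H(y)=h(m_g+gy)/g$ still permits a Weyl bound --- is not a technicality to be checked later; it is where your plan fails under the stated hypotheses. In characteristic $p$ the coefficient of $u^k$ in $h(gu+m_g)$ equals $\sum_{r\in\mathcal{K},\,k\preceq_p r}\binom{r}{k}c_r g^k m_g^{r-k}$, so unless $k$ is \emph{maximal} in $\mathcal{K}$ this coefficient mixes with higher terms and can vanish, destroying the $\mathcal{K}^*$ condition needed to reapply Theorem \ref{thm LL} at the next round. Theorem \ref{first theorem} only assumes $c_k\neq 0$ for \emph{some} $k\in\mathcal{K}^*$, not a maximal one; this is precisely why the paper reserves substitution arguments for Theorem \ref{third theorem}, which carries the stronger "maximal $\mathcal{K}^*$-portion" hypothesis and the machinery of (\ref{maximal stuff}) and Lemma \ref{second cor to matrix lemma}. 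In addition, your iteration has no termination argument: $\deg g$ can be $0$ or $1$ at each stage, so the number of rounds is not bounded in terms of $\mathcal{K}$, $q$, $\deg h$, and the losses would not obviously stay absorbed into $\theta$. To repair the proposal, replace the entire major-arc iteration by the one-shot use of the root modulo $gyc_k$ described above.
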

A set $\mathcal{H} \subseteq \mathbb{F}_q[t] \backslash \{ 0\}$ is said to be \textit{van der Corput}
if the sequence $(a_x)_{x \in \mathbb{F}_q[t]} \subseteq \mathbb{K}_{\infty}$ is equidistributed in $\mathbb{T}$ (defined analogously
as in the case of $\mathbb{R}$), whenever the sequence $(a_{x+h} - a_x)_{x \in \mathbb{F}_q[t]}$ is equidistributed in $\mathbb{T}$ for each $h \in \mathcal{H}.$
We remark that given a polynomial $h(u)$ that satisfies the hypothesis of Theorem \ref{first theorem},
the set
$\{ h(x) : x \in \mathbb{F}_q[t] \} \backslash \{ 0 \}$ is van der Corput \cite[Theorem 23]{LL},
a topic which we do not get into in our current chapter. We instead refer
the reader to \cite{Le} and \cite{LL} for more information on this topic.

It is clear that any polynomial $h(u) \in \mathbb{F}_{q}[t][u]$ such that $(u - a) | h(u)$ for some $a \in \mathbb{F}_q[t]$
satisfies the hypothesis of Theorem \ref{first theorem}. However, polynomials in $\mathbb{F}_{q}[t][u]$ with roots in $\mathbb{F}_q[t]$
are not the only elements satisfying this condition.
\begin{example}
\label{example two}
Let $p=5$ and consider $h(u) = (u^2 - t)(u^2 - (t+1)) (u^2 - (t^2+t)) \in \mathbb{F}_{5}[t][u]$. Then $h(u)$ does not have a root in $\mathbb{F}_5[t]$, but it has a root modulo $g$ for every $g$ in $\mathbb{F}_5[t] \backslash \{ 0\}$.
\end{example}
We postpone the proof of this statement to \ref{A}.
L\^{e} and Spencer also proved the following theorem in \cite{LS}.
\begin{thm} \textnormal{ \cite[Theorem 6]{LS} }
\label{thm 6 LL}
Suppose the polynomials $h_1, ..., h_L$ of distinct degrees are such that any linear combination
of them with integer coefficients has a root modulo $q$ for any $q \in \mathbb{N}$.
Let $\alpha_1, ..., \alpha_L \in \mathbb{R}$. Then there is an exponent $\theta > 0$ (depending
at most on $h_1, ..., h_L$) such that
$$
\min_{1 \leq n \leq N} \| \alpha_1 h_1(n)  + ... + \alpha_L h_L(n)   \| \ll N^{- \theta}
$$
uniformly in $\alpha_1, ..., \alpha_L, N$.
\end{thm}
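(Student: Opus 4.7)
The plan is to adapt the circle-method argument that proves Theorem~\ref{thm LS 1} by passing from a rational approximation to a single $\alpha$ to a \emph{simultaneous} rational approximation to $\vec{\alpha} = (\alpha_1, \ldots, \alpha_L)$, exploiting the fact that the intersective hypothesis applies to \emph{every} integer linear combination of the $h_i$. Order the polynomials by degree, $k_1 < k_2 < \cdots < k_L$, and write $P(n) = \sum_{i=1}^{L} \alpha_i h_i(n)$; since the $k_i$ are distinct, the top coefficient of $P$ as a polynomial in $n$ is $\alpha_L c_{L,k_L}$, where $c_{L,k_L}$ denotes the leading coefficient of $h_L$.

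For the major arc, given a parameter $Q$, simultaneous Dirichlet produces integers $a_1, \ldots, a_L$ and $1 \leq q \leq Q$ with $|\alpha_i - a_i/q| \leq (qQ^{1/L})^{-1}$ for every $i$. Applying the hypothesis to the integer polynomial $h := \sum a_i h_i$ yields $n_q \in [0,q)$ with $h(n_q) \equiv 0 \pmod q$, and for any $n \equiv n_q \pmod q$ one has
\[
P(n) \;=\; \frac{h(n)}{q} \;+\; \sum_{i=1}^{L} \Big(\alpha_i - \frac{a_i}{q}\Big) h_i(n),
\]
in which the first term is an integer and the second is $O(L N^{k_L}/(qQ^{1/L}))$. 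Choosing $Q$ as a large enough power of $N$ renders the error $O(N^{-\theta})$, \emph{provided} $q \leq N$ so that the residue class $n \equiv n_q \pmod q$ meets $[1,N]$. The complementary minor-arc phase handles the case where Dirichlet is forced to return $q > N$: apply Weyl's inequality to $S(\vec{\alpha};N) = \sum_{n \leq N} e(P(n))$, obtaining after $k_L - 1$ differencings a linear exponential sum whose phase coefficient is $k_L!\,\alpha_L c_{L,k_L}$; if $\alpha_L c_{L,k_L}$ has no sufficiently good small-denominator rational approximation then $|S| \ll N^{1-\theta}$, and completion extracts the desired $n$.

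The principal obstacle is reconciling the two regimes, because Weyl probes only the single quantity $\alpha_L c_{L,k_L}$ whereas the major-arc argument requires a simultaneous approximation to the whole vector $\vec{\alpha}$. I would address this by iterating on $L$: if Weyl returns a small-denominator approximation $\alpha_L \approx a/q$, restrict $n$ to a progression $n \equiv n_0 \pmod q$ chosen (via intersectivity of $a h_L$) so that $\alpha_L h_L(n)$ is nearly an integer, and re-run the argument on the residual sum $\sum_{i<L} \alpha_i h_i(n)$. The subtle point---where most of the technical bookkeeping should lie---is that the polynomials $h_i(qn' + n_0)$ need not inherit the intersective hypothesis in the new variable $n'$, since $n_0$ was chosen to trivialise the $\alpha_L h_L$ contribution rather than to be compatible with roots of arbitrary combinations of $h_1, \ldots, h_{L-1}$ modulo other moduli. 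I expect this to be resolved by exploiting the full strength of the joint hypothesis (on \emph{every} integer combination of the $h_i$, including those involving $h_L$) together with a Chinese-Remainder combination of the progressions produced at successive inductive steps, the base case $L = 1$ being supplied by Theorem~\ref{thm LS 1}.
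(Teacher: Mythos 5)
First, a remark: Theorem \ref{thm 6 LL} is quoted from \cite{LS} and is not proved in this paper; the relevant comparison is with the paper's proof of its function-field analogue, Theorem \ref{second theorem}, which follows the L\^{e}--Spencer argument.

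There is a genuine gap, and you have in fact located it yourself without closing it. Your two regimes do not mesh: on the major-arc side, simultaneous Dirichlet approximation of the whole vector $(\alpha_1,\dots,\alpha_L)$ may return a common denominator $q$ as large as $Q\approx N^{L(k_L+\theta)}$, far too large for the progression $n\equiv n_q\ (\mathrm{mod}\ q)$ to meet $[1,N]$; on the minor-arc side, Weyl differencing probes only the single quantity $\alpha_Lc_{L,k_L}$, so a large exponential sum gives no information about why the simultaneous approximation to the full vector was expensive. These are different dichotomies, and neither branch covers the other's failure case. The proposed repair --- inducting on $L$ by peeling off $\alpha_Lh_L$ and restricting to a progression --- founders exactly where you say it does: after substituting $n=qn'+n_0$, the residual system $h_i(qn'+n_0)$, $i<L$, need not satisfy the joint root condition in the variable $n'$, since a root of $\sum_{i<L}a_ih_i$ modulo a new modulus $m$ need not lie in the class $n_0$ modulo $q$. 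Making the residue classes coherent across all moduli requires the projective-limit/Chinese-Remainder construction of Section \ref{prelim}, which you gesture at but do not carry out, and one must then also re-verify that $\alpha_Lh_L$ remains near an integer on the adjusted class. As written, the argument is a plan with its central step missing.

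The actual proof (in \cite{LS}, mirrored here for Theorem \ref{second theorem}) avoids the dichotomy by strengthening the minor-arc input rather than the major-arc one. Two ingredients do the work. First, a triangularization lemma (the integer analogue of Lemmas \ref{matrix lemma} and \ref{cor to matrix lemma}) replaces $h_1,\dots,h_L$ by combinations $g_1,\dots,g_L$ so that each unknown $\gamma_j$ occupies a distinct, isolated coefficient of the combined phase polynomial $f=\sum_jy\gamma_jg_j$. Second --- the tool your sketch lacks --- a Weyl--Vinogradov type estimate asserts that a single large exponential sum $\bigl|\sum_{n\le N}e(f(n))\bigr|\ge N^{1-\eta}$ forces good rational approximations, with one common small denominator, to \emph{all} coefficients of $f$ simultaneously (this is the role played by Theorem \ref{thm LL} and Corollary \ref{Cor LL main} in the function-field setting). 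Assuming the conclusion fails, orthogonality (the analogue of Lemma \ref{first lemma}) produces the large sum, the coefficient estimates yield a common denominator $v$ of controlled size approximating every $\gamma_j$ at once, the root hypothesis applied to $\sum_ja_jg_j$ modulo $v$ produces the required $n$, and no iteration over $L$ and no case split between regimes is needed.
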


Suppose we have polynomials $h_1, ..., h_L \in \mathbb{F}_{q}[t] [u]$, where
$h_j(u) = \sum_{r \in \mathcal{K}_j \cup \{ 0 \}} c_{j,r} u^r$,
and $\mathcal{K}_j \subseteq \mathbb{Z}^+ \ (1 \leq j \leq L)$.
Let $\mathcal{K} = \mathcal{K}_1 \cup ... \cup \mathcal{K}_L$.
We define the  \textit{$\mathcal{K}^*$-portion} of $h_j$   as
$$
h_j^ 
* (u):= \sum_{r \in \mathcal{K}_j \cap \mathcal{K}^*} c_{j,r} u^r.
$$
We say the \textit{$\mathcal{K}^*$-portion of $( h_j )_{j=1}^L$ is linearly independent}
if $h_1^{*}, ..., h_L^{*}$ are linearly independent over $\mathbb{K}$.
We also define a slightly stronger notion, the \textit{maximal $\mathcal{K}^*$-portion} of $h_j$ as
$$
h_j^{\text{max }} (u):= \sum_{
\substack{
r \in \mathcal{K}_j \cap \mathcal{K}^* \\
r \text{ is maximal in } \mathcal{K}
} } c_{j,r} u^r.
$$
We say the \textit{maximal $\mathcal{K}^*$-portion of $( h_j )_{j=1}^L$ is linearly independent}
if $ h_1^{\text{max }}, ... , h_L^{\text{max }}$ are linearly independent over $\mathbb{K}$.
Clearly, if the maximal $\mathcal{K}^*$-portion of $( h_j )_{j=1}^L$ is linearly independent, then
the $\mathcal{K}^*$-portion of $( h_j )_{j=1}^L$ is linearly independent.
We give an example of these notions below.
\begin{example}
Let $p>3$. Consider polynomials $f_1 (u) = c_{2p^2+p} u^{2p^2 + p}+ c_{3p+1} u^{3p+1} + c_p u^p + c_2 u^2 + c_1 u^1 + c_0$ and
$f_2 (u) = c'_{p^3+3p+1} u^{p^3+3p + 1} + c'_{p^2+1} u^{p^2 + 1} + c'_{2p+1} u^{2p+1}$ in $\mathbb{F}_q[t][u]$, where each $c_j$ and $c'_{j'}$
are non-zero elements of $\mathbb{F}_q[t]$. In other words, $f_1 (u)$ and $f_2 (u)$ 
are supported on $\mathcal{K}_1 = \{2p^2+p, 3p+1, p, 2, 1 \}$ and $\mathcal{K}_2 = \{p^3+3p+1, p^2+1, 2p+1 \}$, respectively.
Thus we let
$$
\mathcal{K} = \mathcal{K}_1 \cup \mathcal{K}_2 = \{p^3+ 3p + 1, 2p^2+p, p^2+1, 3p+1, 2p+1, p, 2, 1 \},
$$
and we can verify that
$$
\mathcal{K}^* = \{p^3 + 3p + 1, 3p+1 \}.
$$
It follows that
$$
f_1^*(u) = c_{3p+1} u^{3p+1} \ \ \text{ and } \ \ f_2^*(u) = c'_{p^3+3p+1} u^{p^3+3p+1},
$$
which are clearly linearly independent over $\mathbb{K}$. Therefore, $\mathcal{K}^*$-portion of $( f_1, f_2)$ is linearly independent.
However, note $p^3 + 3p + 1$ is maximal in $\mathcal{K}$, but not $3p+1$. Thus we have
$$
f_1^{\text{max }}(u) = 0 \ \ \text{ and } \ \ f_2^{\text{max }}(u) = c'_{p^3+3p+1} u^{p^3+3p+1},
$$
and consequently, the maximal $\mathcal{K}^*$-portion of $( f_1, f_2)$ is not linearly independent.
\end{example}

The following theorem is an analogue of Theorem \ref{thm 6 LL}.
\begin{thm}
\label{second theorem}
Let $h_j \in \mathbb{F}_{q}[t] [u]$ be supported on a set $\mathcal{K}_j \subseteq \mathbb{Z}^+ \ (1 \leq j \leq L)$, and let
$\mathcal{K} = \mathcal{K}_1 \cup ... \cup \mathcal{K}_L$.
Suppose
any linear combination of them with $\mathbb{F}_q[t]$ coefficients has a root modulo $g$ for any $g \in \mathbb{F}_q[t] \backslash \{ 0\}$.
Suppose further that the $\mathcal{K}^*$-portion of $( h_j )_{j=1}^L$ is linearly independent.
Then there exist $\theta = \theta \left( \mathcal{K}, q, \max_{ 1\leq j \leq L} \deg h_j \right) > 0$
and $N_0 = N_0(\mathcal{K}, q, \theta, h_1, ..., h_L) \in \mathbb{Z}^+$ such that for any $N > N_0$, we have
$$
\min_{x \in \mathbb{G}_N} \ord \{ \beta_1 h_1(x) + ... + \beta_L h_L(x) \} \leq -\theta N
$$
uniformly in $\beta_1, ..., \beta_L \in \mathbb{K}_{\infty}$.
\end{thm}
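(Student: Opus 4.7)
The plan is to argue by contradiction along the lines of L\^e--Spencer's proof of Theorem \ref{thm 6 LL}, adapted to the function field $\mathbb{F}_q[t]$ via the characteristic-$p$ Weyl machinery of \cite{LL} and paralleling the strategy used to establish Theorem \ref{first theorem}. Assume for contradiction that for arbitrarily large $N$ there exist $\beta_1, \ldots, \beta_L \in \mathbb{K}_\infty$ with $\ord\{F(x)\} > -\theta N$ for every $x \in \mathbb{G}_N$, where $F := \sum_j \beta_j h_j$ and $\theta > 0$ is to be chosen small, depending only on $\mathcal{K}$, $q$, and $d := \max_j \deg h_j$. By orthogonality of the additive characters on $\mathbb{F}_q[t]$, the number of $x \in \mathbb{G}_N$ satisfying $\ord\{F(x)\} \leq -\theta N$ is an average of $\sum_x e(sF(x))$ over $s \in \mathbb{F}_q[t]$ with $\deg s < \theta N$; the contradiction hypothesis forces this count to vanish, producing some non-zero such $s$ with
$$ \Bigl| \sum_{x \in \mathbb{G}_N} e(sF(x)) \Bigr| \;\geq\; q^{(1-\theta)N}. $$

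Next I would apply the iterated Weyl-differencing inequality developed in \cite{LL} to the polynomial $sF(x) = \sum_j (s\beta_j)\, h_j(x)$, which is supported on $\mathcal{K}$. The characteristic-$p$ Weyl estimate provides a common denominator $g \in \mathbb{F}_q[t] \setminus \{0\}$ with $\deg g = O(\theta N)$ together with $\mathbb{F}_q[t]$-numerators $a_{j,r}$ such that $s\beta_j c_{j,r}$ is close to $a_{j,r}/g$ for each $j$ and each $r \in \mathcal{K}_j \cap \mathcal{K}^*$, with accuracy at least $q^{-rN + O(\theta N)}$. The hypothesis that the $\mathcal{K}^*$-portions $h_1^*, \ldots, h_L^*$ are $\mathbb{K}$-linearly independent lets one invert the resulting linear system coefficient-wise and recover individual simultaneous approximations $\beta_j = a_j/(sg) + \gamma_j$ with $\deg(sg) = O(\theta N)$ and $|\gamma_j| \leq |sg|^{-1} q^{-N + O(\theta N)}$ (after absorbing bounded-size factors of $c_{j,r}$ into $g$).

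Now set $H(u) := a_1 h_1(u) + \cdots + a_L h_L(u) \in \mathbb{F}_q[t][u]$. As an $\mathbb{F}_q[t]$-linear combination of the $h_j$, the divisibility hypothesis furnishes a root $m \in \mathbb{G}_{\deg(sg)}$ of $H$ modulo $sg$; since $\deg m < \deg(sg) = O(\theta N) < N$, one has $m \in \mathbb{G}_N$. The decomposition
$$ F(m) \;=\; \frac{H(m)}{sg} \;+\; \sum_{j=1}^L \Bigl(\beta_j - \frac{a_j}{sg}\Bigr) h_j(m) $$
reduces analysing $\{F(m)\}$ to estimating the second summand, since the first term lies in $\mathbb{F}_q[t]$. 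A direct calculation using $|h_j(m)| \leq q^{d \deg(sg) + O(1)} = q^{O(\theta N)}$ together with the approximation bound on $\gamma_j$ gives $\ord\{F(m)\} \leq -N + O(\theta N) \leq -\theta N - 1$ once $\theta$ is small enough, contradicting the standing assumption.

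The main obstacle is the Weyl/linear-algebra step: extracting coefficient-by-coefficient Diophantine approximations for all $\beta_1, \ldots, \beta_L$ simultaneously from a single lower bound on $|\sum_x e(sF(x))|$. In the classical setting of \cite{LS} this is handled by the distinct-degrees hypothesis, which isolates each $h_j$ via its leading term; here the indices in $\mathcal{K}^*$ may overlap across different $h_j$, so the $\mathbb{K}$-linear-independence of the $\mathcal{K}^*$-portions must be used to decouple the approximations. Implementing this decoupling while keeping the common denominator of degree only $O(\theta N)$, and synchronising it with the divisibility-based construction of $m$, is the technical heart of the argument, and is precisely where the positive-characteristic Weyl framework of \cite{LL} becomes essential.
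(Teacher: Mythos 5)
Your proposal is correct and follows essentially the same route as the paper: contradiction, Lemma \ref{first lemma}, the coefficient-approximation result of L\^{e}--Liu (Theorem \ref{thm LL} / Corollary \ref{Cor LL main}), the linear independence of the $\mathcal{K}^*$-portions to decouple the $\beta_j$, and the root-modulo-$g$ hypothesis applied to the $\mathbb{F}_q[t]$-combination built from the numerators. The only (harmless) difference is ordering: the paper first diagonalizes the system via Lemma \ref{cor to matrix lemma}, so that each $\gamma_j$ is read off a single $\mathcal{K}^*$-coefficient of the transformed polynomial, whereas you apply the Weyl machinery to $sF$ directly and then invert the approximate linear system --- which works, since the approximated quantities are the coefficients $\sum_j s\beta_j c_{j,r}$ and the matrix $(c_{j,r})_{j,\,r\in\mathcal{K}^*}$ has full row rank by hypothesis.
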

We give an example of a system of polynomials $(h_1, h_2) \subseteq \mathbb{F}_5[t][u]$ that satisfies the hypothesis of Theorem \ref{second theorem}
in Example \ref{example three}. We note that these polynomials $h_1(u)$ and $h_2(u)$ do not have a common root in $\mathbb{F}_5[t]$, but they do have $h(u)$ from Example \ref{example two} as a common factor. There may well be examples of systems $(h_1, ..., h_L)$ without a common factor
such that any linear combination of them with $\mathbb{F}_q[t]$ coefficients has a root modulo $g$ for any $g \in \mathbb{F}_q[t] \backslash \{ 0\}$,
but we do not have such an example in hand at this time.



We also prove an analogue of \cite[Theorem 7]{LS} in Theorem \ref{third theorem},
which is a (partial) generalization of Theorem \ref{second theorem}.
However, we defer stating the result to Section \ref{proving} in order to avoid introducing further
notation here.

The organization of the rest of the paper is as follows. In Section \ref{prelim}, we
introduce some notation and notions required to carry out our discussions in the setting over
$\mathbb{F}_q[t]$. In Section \ref{basic lin alg}, we prove lemmas involving basic linear
algebra utilized in the proof of our main results given in Section \ref{proving}. We provide
the proof of the statement in Example \ref{example two} in \ref{A}.
We note that L\^{e} and Spencer generalized \cite[Theorem 7]{LS}, which Theorem \ref{third theorem}
is an analogue of, and obtained results on simultaneous approximation \cite[Theorems 4 and 8]{LS}.
However, due to complications that arose during our attempt from certain arguments in linear algebra and geometry of numbers
in the setting over $\mathbb{F}_q[t]$, at present time we decided to leave generalizing Theorem \ref{third theorem}
in a similar manner as a possible future work. Finally, in the case when the polynomials
in question do not have constant terms, a more general result is available due to Spencer and Wooley \cite{SW}.
We note that their result does not require the extra hypothesis on the coefficients as in this paper.


\section{Preliminaries}
\label{prelim}

Suppose a system of polynomials $(h_1, ..., h_L)$ satisfies the following,
\newline
\newline
Condition $ ( \star ) $: For every $g \in \mathbb{F}_q[t] \backslash \{ 0\}$,
there exists $m_g \in \mathbb{F}_q[t]$ such that $h_i(m_g) \equiv 0 \ (\text{mod }g)$
for $i = 1, ..., L$.
\newline
\newline
In the case of $\mathbb{Z}$ (in place of $\mathbb{F}_q[t]$), such a system of polynomials
satisfying the analogous condition is called \textit{jointly interesective polynomials}.

We have the following analogue of \cite[Proposition 6.1]{BAL}, which we omit the proof of. 
\begin{lem}
\label{joint to single}
A system of polynomials $(h_1, ..., h_L)$ in $\mathbb{F}_q[t][u]$ satisfies Condition $ ( \star ) $
if and only if there exists a polynomial $d \in \mathbb{F}_q[t][u]$, which has a root
modulo $g$ 
for every $g \in \mathbb{F}_q[t] \backslash \{ 0\}$, and $d | h_i \ (1 \leq i \leq L)$
over $\mathbb{F}_q[t]$.
\end{lem}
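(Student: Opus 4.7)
My plan is to handle the two implications separately, with the nontrivial one relying on a B\'ezout identity in $\mathbb{F}_q(t)[u]$ together with a $\pi$-adic valuation argument.

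The ``if'' direction is immediate: if $d \mid h_i$ in $\mathbb{F}_q[t][u]$, write $h_i = d\cdot\widetilde h_i$ with $\widetilde h_i \in \mathbb{F}_q[t][u]$; then any $m_g$ with $d(m_g)\equiv 0\pmod g$ satisfies $h_i(m_g)=d(m_g)\widetilde h_i(m_g)\equiv 0\pmod g$ for every $i$, verifying Condition $(\star)$.

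For the ``only if'' direction, set $R=\mathbb{F}_q[t]$ and $K=\mathbb{F}_q(t)$. Since $R[u]$ is a UFD, I would let $d\in R[u]$ be a primitive gcd of $h_1,\dots,h_L$ and write $h_i = d\cdot h_i'$ with $h_i'\in R[u]$. By Gauss's lemma, the $h_i'$ share no non-constant factor in $K[u]$ (otherwise one could pull back a primitive common factor to $R[u]$ and contradict maximality of $d$), so B\'ezout in the PID $K[u]$ and clearing denominators yield $b_1,\dots,b_L\in R[u]$ and a nonzero $N\in R$ with
\[
\sum_{i=1}^{L} b_i(u)\,h_i'(u) \;=\; N.
\]
It then suffices to show that $d$ itself has a root modulo every $g\in R\setminus\{0\}$.

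By the Chinese Remainder Theorem in $R$, it is enough to produce, for each monic irreducible $\pi\in R$ and each integer $k\ge 1$, an element $m\in R$ with $\pi^k\mid d(m)$. I would apply Condition $(\star)$ to $g=\pi^{M}$ with $M:=k+v_\pi(N)$, obtaining $m\in R$ such that $v_\pi(h_i(m))\ge M$, i.e.\ $v_\pi(d(m))+v_\pi(h_i'(m))\ge M$ for every $i$. Evaluating the B\'ezout relation at $u=m$ forces $\min_i v_\pi(h_i'(m))\le v_\pi(N)$, and choosing the minimizing index gives $v_\pi(d(m))\ge M-v_\pi(N)=k$, as required. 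The only delicate point in the whole argument is the denominator $N$ one picks up when passing from $R[u]$ to the PID $K[u]$; the $v_\pi$-bookkeeping above is tailored precisely to absorb it, and I do not anticipate any further obstacle.
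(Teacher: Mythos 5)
Your argument is correct. Note that the paper itself omits the proof of this lemma, deferring to the integer analogue \cite[Proposition 6.1]{BAL}; the standard proof there is exactly your strategy (factor out a primitive gcd $d$, use coprimality of the cofactors in the fraction-field PID to get a B\'ezout identity $\sum b_i h_i' = N$, and conclude that a common high-order root of the $h_i$ must be a high-order root of $d$), so you are not taking a genuinely different route. The only stylistic difference is that the reference phrases the conclusion $w$-adically --- a common root of the $h_i$ in $\lim\limits_{\substack{\leftarrow \\ N}} \mathbb{F}_q[t]/w^N\mathbb{F}_q[t]$ is a root of $d$ there, which is the framework the paper sets up in Lemmas \ref{p adic lemma 2} and \ref{p adic lemma 3} --- whereas you run a finitary $v_\pi$-bookkeeping with the shift $M = k + v_\pi(N)$; both are sound, and your version avoids any appeal to the inverse limit. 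One small point worth making explicit: reassembling a root of $d$ modulo an arbitrary $g$ from the prime-power cases uses that $d$ has coefficients in $\mathbb{F}_q[t]$, so that the CRT-combined residue still satisfies $d(m) \equiv 0 \pmod{\pi_i^{k_i}}$ for each $i$.
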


Let $w$ be a monic irreducible polynomial in $\mathbb{F}_q[t]$.
Let $\lambda_N$ be the canonical projection
from $\mathbb{F}_q[t] / w^{N+1} \mathbb{F}_q[t]$ to $\mathbb{F}_q[t] / w^N \mathbb{F}_q[t]$.
For each $w$, we define the projective limit
$$
\lim_{ \substack{\leftarrow \\ N} } \mathbb{F}_q[t] / w^N \mathbb{F}_q[t]
=
\Big{\{} (x_i)_{i \in \mathbb{N}} \in \prod_{i=1}^{\infty} \mathbb{F}_q[t] / w^i \mathbb{F}_q[t] : \lambda_i(x_{i+1}) = x_i, i = 1, 2, ... \Big{\}}.
$$
Take $\bar{x} = (x_i)_{i \in \mathbb{N}}  \in  \lim\limits_{ \substack{\leftarrow  \\ N }} \mathbb{F}_q[t] / w^N \mathbb{F}_q[t].
$
We say that $\bar{x}$ is a solution to the equation $f(u) = 0$, if $\bar{x}$ satisfies
$$
f(x_i) \equiv 0 \ (\text{mod } w^i)
$$
for all $i \in \mathbb{N}$.

We have the following lemma, which its proof follows closely that of the $p$-adic integers, for
example see \cite[Chapter II, Proposition 1.4]{N}. 
\begin{lem}
\label{p adic lemma 2}
Let $f$ be a polynomial in $\mathbb{F}_q[t][u]$ and $w$ a monic irreducible in $\mathbb{F}_q[t]$. Then $f$ has a root modulo $w^N$ 
for every
$N \in \mathbb{N}$ if and only if the equation $f(u) = 0$ has a solution in $\lim\limits_{ \substack {\leftarrow \\ N} } \mathbb{F}_q[t] / w^N \mathbb{F}_q[t]$.
\end{lem}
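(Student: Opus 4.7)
The plan is to follow the standard profinite/compactness argument familiar from the theory of $p$-adic integers. The ($\Leftarrow$) direction is essentially immediate: if $\bar{x} = (x_i)_{i \in \mathbb{N}}$ is a solution to $f(u) = 0$ in the projective limit, then by definition $f(x_N) \equiv 0 \pmod{w^N}$ for every $N$, so $x_N$ itself provides a root of $f$ modulo $w^N$.

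For the ($\Rightarrow$) direction, I would set up a compactness argument. For each $N \in \mathbb{N}$, let
$$
B_N = \{ \bar{x} = (x_i)_{i \in \mathbb{N}} \in \lim_{\substack{\leftarrow \\ N}} \mathbb{F}_q[t]/w^N \mathbb{F}_q[t] : f(x_N) \equiv 0 \ (\textup{mod } w^N) \}.
$$
First I would check that each $B_N$ is non-empty: by hypothesis there exists a root $y \in \mathbb{F}_q[t]/w^N\mathbb{F}_q[t]$ of $f$ modulo $w^N$; since each canonical projection $\lambda_i$ is surjective, one can lift $y$ successively to $\mathbb{F}_q[t]/w^M\mathbb{F}_q[t]$ for all $M > N$ and project $y$ to the lower residue rings, producing a coherent sequence whose $N$-th coordinate is $y$, which therefore lies in $B_N$.

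Next I would observe that the sequence $(B_N)$ is nested decreasingly: if $\bar{x} \in B_{N+1}$, then reducing the congruence $f(x_{N+1}) \equiv 0 \pmod{w^{N+1}}$ modulo $w^N$ and using $x_N = \lambda_N(x_{N+1})$ yields $f(x_N) \equiv 0 \pmod{w^N}$, so $\bar{x} \in B_N$. Finally I would invoke compactness: the projective limit is a closed subspace of the product $\prod_{i \geq 1} \mathbb{F}_q[t]/w^i\mathbb{F}_q[t]$ of finite discrete spaces, hence is compact by Tychonoff's theorem. Each $B_N$ is closed, being cut out by a condition on the single coordinate $x_N$, and the collection $(B_N)$ has the finite intersection property because it is nested and each member is non-empty. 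Therefore $\bigcap_{N \geq 1} B_N$ is non-empty, and any element of this intersection is a solution to $f(u) = 0$ in $\lim_{\leftarrow N} \mathbb{F}_q[t]/w^N \mathbb{F}_q[t]$.

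There is no serious obstacle here; the only care needed is to verify that the structural ingredients of the classical argument over $\mathbb{Z}_p$ (finiteness of each quotient, surjectivity of transition maps, continuity of the evaluation-then-reduction map $\bar{x} \mapsto f(x_N) \bmod w^N$) transfer verbatim to $\mathbb{F}_q[t]$, which they plainly do since the algebraic structure of $\mathbb{F}_q[t]/w^N\mathbb{F}_q[t]$ with $w$ irreducible parallels that of $\mathbb{Z}/p^N\mathbb{Z}$ exactly.
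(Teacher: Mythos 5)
Your proof is correct and is essentially the argument the paper has in mind: the paper omits the proof, noting only that it ``follows closely that of the $p$-adic integers'' (citing Neukirch, Ch.~II, Prop.~1.4), and your compactness/finite-intersection-property argument is the standard topological packaging of that classical proof (Neukirch's version extracts the coherent sequence by iterated pigeonhole, which is the same idea). All the steps you flag --- non-emptiness of each $B_N$ via surjectivity of the transition maps, the nesting, and compactness of the profinite limit --- check out.
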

We leave the following lemma as an exercise for the reader.
\begin{lem}
\label{p adic lemma 3}
Let $f$ be a polynomial in $\mathbb{F}_q[t][u]$. Then $f$ has a root modulo $g$ 
for every
$g \in \mathbb{F}_q[t] \backslash \{ 0\}$ if and only if for every monic irreducible $w$,
the equation $f(u) = 0$ has a solution in $\lim\limits_{ \substack{ \leftarrow \\ N} } \mathbb{F}_q[t] / w^N \mathbb{F}_q[t]$.
\end{lem}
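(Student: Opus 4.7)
The plan is to reduce the divisibility condition on an arbitrary nonzero $g \in \mathbb{F}_q[t]$ to the prime-power case via the Chinese Remainder Theorem and then to invoke Lemma \ref{p adic lemma 2} separately for each monic irreducible.

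For the forward implication, suppose $f$ has a root modulo every nonzero $g$. Specialising to $g = w^N$ for a monic irreducible $w$ and an arbitrary $N \in \mathbb{N}$, we obtain a root of $f$ modulo $w^N$ for every $N$. Lemma \ref{p adic lemma 2} then converts this into a solution in the inverse limit $\lim\limits_{\substack{\leftarrow \\ N}} \mathbb{F}_q[t]/w^N \mathbb{F}_q[t]$, as required.

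For the reverse implication, I would first use Lemma \ref{p adic lemma 2} in the opposite direction to upgrade the inverse-limit solutions into the statement that, for every monic irreducible $w$ and every $N \in \mathbb{N}$, $f$ has a root modulo $w^N$. Now fix an arbitrary $g \in \mathbb{F}_q[t] \backslash \{0\}$. Using unique factorisation in $\mathbb{F}_q[t]$, write $g = c\, w_1^{N_1} \cdots w_s^{N_s}$ with $c \in \mathbb{F}_q^{\times}$ and $w_1, \dots, w_s$ distinct monic irreducibles. For each $i$, choose $m_i \in \mathbb{F}_q[t]$ with $f(m_i) \equiv 0 \ (\text{mod } w_i^{N_i})$. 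Since the $w_i^{N_i}$ are pairwise coprime and $\mathbb{F}_q[t]$ is a principal ideal domain, the Chinese Remainder Theorem produces $m \in \mathbb{F}_q[t]$ satisfying $m \equiv m_i \ (\text{mod } w_i^{N_i})$ for all $i$. Polynomial evaluation respects congruences, so $f(m) \equiv f(m_i) \equiv 0 \ (\text{mod } w_i^{N_i})$ for each $i$, and combining these congruences via coprimality gives $f(m) \equiv 0 \ (\text{mod } w_1^{N_1} \cdots w_s^{N_s})$. Because $c$ is a unit in $\mathbb{F}_q[t]$, this is the same as $f(m) \equiv 0 \ (\text{mod } g)$, completing the proof.

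The argument is essentially bookkeeping, which is presumably why the authors leave it to the reader. The only step that might warrant a moment's pause is the assertion that $m \equiv m_i \ (\text{mod } w_i^{N_i})$ implies $f(m) \equiv f(m_i) \ (\text{mod } w_i^{N_i})$, but this follows immediately from the factorisation $m^r - m_i^r = (m - m_i) \sum_{j=0}^{r-1} m^j m_i^{r-1-j}$, so there is no substantive obstacle.
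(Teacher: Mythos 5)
Your proof is correct, and since the paper explicitly leaves this lemma as an exercise, there is no authorial proof to diverge from; your argument (specialise to $g = w^N$ and apply Lemma \ref{p adic lemma 2} for the forward direction, then Lemma \ref{p adic lemma 2} plus unique factorisation and the Chinese Remainder Theorem for the converse) is exactly the intended routine verification.
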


Corresponding to any system of polynomials $(h_1, ..., h_L)$ satisfying Condition $ ( \star ) $,
there exists $d \in \mathbb{F}_q[t][u]$ satisfying the conditions of Lemma \ref{joint to single}.
Given a monic irreducible $w$, by Lemma \ref{p adic lemma 3}, we know there exists
$(r_{w^j}) \in \lim\limits_{ \substack{ \leftarrow \\ N} } \mathbb{F}_q[t] / w^N \mathbb{F}_q[t]$
which is a solution to $d(u)=0$, in other words $d(r_{w^j}) \equiv 0 \ (\text{mod } w^j)$
and $r_{w^j} \equiv r_{w^{j+1}} \ (\text{mod }w^j)$ for all $j \in \mathbb{N}$. We fix such a solution for each $w$.
Suppose we are given $g = a \prod_{i=1}^T w_i^{S_i} = a g_1$, where the $w_i$'s are distinct monic
irreducibles in $\mathbb{F}_q[t]$ and $a \in \mathbb{F}_q$. By the Chinese Remainder Theorem, we define $r_g$ to be the unique
element in $\mathbb{F}_q[t] / (g_1)$ such that $r_g \equiv r_{w_i^{S_i}} \ ( \text{mod } w_i^{S_i} )$ for $1 \leq i \leq T$.
Since $d(r_g) \equiv 0 \ (\text{mod } w_i^{S_i} )$ for $1 \leq i \leq T$, it follows that
$d(r_g) \equiv 0 \ (\text{mod } g)$.
Suppose we have $y = b \prod_{i=1}^T w_i^{S'_i}$, where $S'_i \leq S_i$ and $b \in \mathbb{F}_q$, so that $y | g$. Then since
$r_g \equiv r_{w_i^{S_i}} \equiv r_{w_i^{S'_i}} \ (\text{mod } w_i^{S'_i})$ for $1 \leq i \leq T$,
we obtain $r_g \equiv r_y (\text{mod } y)$. 
Finally, for $a \in \mathbb{F}_q$ we let $r_a = 0$.

Therefore, corresponding to any system of polynomials $(h_1, ..., h_L)$ satisfying Condition $ ( \star ) $,
we can associate a sequence $(r_x)_{ x \in \mathbb{F}_q[t] \backslash \{ 0 \} } \subseteq \mathbb{F}_q[t]$
such that for any $m, y \in \mathbb{F}_q[t] \backslash \{ 0 \}$, $r_y \in \mathbb{G}_{\ord y}, \ r_{my} \equiv r_y \ (\text{mod } y)$,
and
\begin{equation}
\label{r_d's}
h_j(r_y) \equiv 0 \ (\text{mod } y) \ \ (1 \leq j \leq L).
\end{equation}
We note that the approach to define the sequence $(r_x)_{ x \in \mathbb{F}_q[t] \backslash \{ 0 \} }$
here was taken from 
\cite{Lu}, which deals with the case of $\mathbb{Z}$.

For any element $\alpha \in \mathbb{K}_{\infty}$, it is easy to see that
$$
\ord \{ \alpha \} = \min_{z \in \mathbb{F}_q[t]} \ord (\alpha - z),
$$
where the minimum is achieved when $z = \alpha - \{ \alpha \}$, the \textit{integral} part of $\alpha$.
Also for $\alpha_1, ..., \alpha_L \in \mathbb{K}_{\infty}$, we have
\begin{eqnarray}
\notag
\ord \Big{\{} \sum_{j=1}^L \alpha_j  \Big{ \} }
&\leq& \ord \left( \sum_{j=1}^L \alpha_j - \sum_{j=1}^L (\alpha_j - \{ \alpha_j \}) \right)
\\
&=& \ord \left( \sum_{j=1}^L \{ \alpha_j \} \right)
\notag
\\
&\leq& \max_{1 \leq j \leq L} \ord \{ \alpha_j \}.
\label{ord fractional part}
\end{eqnarray}

\begin{lem}
\label{first lemma}
Let $\beta_1, \beta_2, ..., \beta_R \in \mathbb{K}_{\infty}$ and suppose $\ord \{ \beta_j\} \geq -M \ (1 \leq j \leq R)$.
Then there exists $x \in \mathbb{G}_M \backslash \{ 0 \}$ such that
$$
\Big{|} \sum_{j=1}^R  e(x \beta_j) \Big{|} \geq \frac{R}{q^M-1}.
$$
\end{lem}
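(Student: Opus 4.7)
The plan is to run a one-line first-moment argument driven by the orthogonality of the character $e$ on $\mathbb{G}_M$. First I would evaluate the complete character sum $\sum_{x \in \mathbb{G}_M} e(x\beta)$ for an arbitrary $\beta \in \mathbb{K}_\infty$. Writing $x = b_0 + b_1 t + \cdots + b_{M-1} t^{M-1}$ and $\{\beta\} = \sum_{i \leq -1} a_i t^i$, the integral part of $\beta$ contributes nothing to $\res(x\beta)$, so
$$
\res(x\beta) \;=\; \sum_{j=0}^{M-1} b_j\, a_{-1-j},
$$
and the exponential factors as a product $\prod_{j=0}^{M-1} e_q(b_j a_{-1-j})$. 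Summing each coordinate independently over $\mathbb{F}_q$ and invoking orthogonality on $\mathbb{F}_q$ gives the clean formula
$$
\sum_{x \in \mathbb{G}_M} e(x\beta) \;=\; \begin{cases} q^M & \text{if } \ord\{\beta\} < -M, \\ 0 & \text{otherwise.}\end{cases}
$$

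Next, since by hypothesis $\ord\{\beta_j\} \geq -M$ for every $j$ (in particular $\{\beta_j\} \neq 0$, so $\ord\{\beta_j\}$ is finite and lies in $\{-M,\dots,-1\}$), the second case applies and each inner sum vanishes. Setting $f(x) = \sum_{j=1}^R e(x\beta_j)$, swapping the order of summation therefore yields
$$
\sum_{x \in \mathbb{G}_M} f(x) \;=\; \sum_{j=1}^{R} \sum_{x \in \mathbb{G}_M} e(x \beta_j) \;=\; 0.
$$
Since $f(0) = R$, separating the zero term gives $\sum_{x \in \mathbb{G}_M \setminus \{0\}} f(x) = -R$.

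Finally, the triangle inequality forces $\sum_{x \in \mathbb{G}_M \setminus \{0\}} |f(x)| \geq R$, and averaging over the $q^M - 1$ nonzero elements of $\mathbb{G}_M$ yields at least one $x \in \mathbb{G}_M \setminus \{0\}$ with $|f(x)| \geq R/(q^M-1)$, which is exactly the claim. There is no real obstacle here: the only point that requires a moment's care is confirming that the hypothesis $\ord\{\beta_j\} \geq -M$ places one squarely in the ``otherwise'' branch of the orthogonality relation (which it does, since $\ord 0 = -\infty < -M$ is explicitly excluded by the hypothesis). Everything else is a direct averaging.
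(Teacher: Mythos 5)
Your proof is correct and follows the same route as the paper: the paper also reduces the claim to the orthogonality relation $\sum_{x \in \mathbb{G}_M} e(x\alpha) = q^M$ or $0$ according as $\ord\{\alpha\} < -M$ or not (cited there from Kubota's Lemma 7 rather than derived, as you do, from the factorization of $\res(x\beta)$ over the coefficients of $x$), concludes that $\sum_{j}\sum_{x \in \mathbb{G}_M} e(x\beta_j) = 0$, and finishes by the triangle inequality and averaging over the $q^M - 1$ nonzero elements. No gaps.
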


\begin{proof}
For $\alpha \in \mathbb{K}_{\infty}$, we have by \cite[Lemma 7]{RMK} 
\begin{eqnarray}
\label{kubota}
\sum_{x \in \mathbb{G}_M} e(x \alpha)=
\left\{
    \begin{array}{ll}
         q^M, &\mbox{if } \ord \{\alpha \} <  -M, \\
         0, &\mbox{if  } \ord \{\alpha \} \geq  -M. \\
    \end{array}
\right.
\end{eqnarray}
Since $\ord \{ \beta_j \} \geq -M \ (1 \leq j \leq R)$, we have
$$
\sum_{j=1}^R \sum_{x \in \mathbb{G}_M} e(x \beta_j) = 0.
$$
Therefore, it follows that
$$
\sum_{x \in \mathbb{G}_M \backslash \{ 0 \}} \Big{|} \sum_{j=1}^R  e(x \beta_j) \Big{|} \geq R,
$$
from which we obtain our result.
\end{proof}

We invoke the following result from \cite{LL}. The theorem allows us to estimate
certain coefficients of a polynomial $f(u)$ by an element in $\mathbb{K}$ when the exponential sum
of $f(u)$ is sufficiently large.
\begin{thm} \textnormal{\cite[Theorem 15]{LL}} 
\label{thm LL}
Let $f(u) = \sum_{r \in \mathcal{K} \cup \{ 0 \}} \alpha_r u^r$ be a polynomial supported on a set $\mathcal{K} \subseteq \mathbb{Z}^+$
with coefficients in $\mathbb{K}_{\infty}$. Then for any $k \in \mathcal{K}^*$, there exist
constants $c_k, C_k > 0$, depending only on $\mathcal{K}$ and $q$, such that the following holds:
suppose that for some $0 < \eta \leq c_k N$, we have
$$
\Big{|} \sum_{x \in \mathbb{G}_N}  e(f(x)) \Big{|} \geq q^{N- \eta}.
$$
Then for any $\epsilon > 0$ and $N$ sufficiently large in terms of $\mathcal{K}$, $\epsilon$ and $q$,
there exist $a_k, g_k \in \mathbb{F}_q[t]$ such that
$$
\ord (g_k \alpha_k - a_k) < -k N + \epsilon N + C_k \eta \ \ \text{  and  } \ \ \ord g_k \leq \epsilon N + C_k \eta.
$$
\end{thm}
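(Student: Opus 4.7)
The plan is to run a characteristic-$p$ Weyl differencing argument adapted to $\mathbb{F}_q[t]$, reducing the bound on $\left|\sum_x e(f(x))\right|$ to a bound on a linear exponential sum whose slope factors through $\alpha_k$, and then to extract a rational approximation by a pigeonhole argument in $\mathbb{F}_q[t]$. The starting point is iterated Cauchy--Schwarz in the shift parameter, giving
$$ \Big| \sum_{x \in \mathbb{G}_N} e(f(x)) \Big|^{2^s} \leq q^{(2^s - 1) N} \sum_{h_1, \ldots, h_s \in \mathbb{G}_N} \Big| \sum_{x \in \mathbb{G}_N} e(\Delta_{h_1} \cdots \Delta_{h_s} f(x)) \Big| $$
after $s$ differencing steps, where $\Delta_h f(x) = f(x+h) - f(x)$.

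The crucial combinatorial step is controlling the action of $\Delta_h$ on each monomial,
$$ \Delta_h(\alpha_r u^r) = \sum_{\substack{j \preceq_p r \\ j < r}} \binom{r}{j} \alpha_r h^{r-j} u^j, $$
so the exponents appearing at every stage lie in $\mathcal{S}(\mathcal{K})$. The defining properties of $k \in \mathcal{K}^*$ are exactly what is needed to choose a differencing scheme of some length $s = s(k, \mathcal{K})$ that (a) leaves the $\alpha_k$ contribution as a non-vanishing linear term $c\, \alpha_k h_1 \cdots h_s\, u$ with $c \in \mathbb{F}_q^\times$, using $p \nmid k$ to guarantee non-cancellation at each step, and (b) prevents any monomial $\alpha_r u^r$ with $r \in \mathcal{K} \setminus \{k\}$ from producing a colliding linear term: such a collision would ultimately force $p^v k \in \mathcal{S}(\mathcal{K})$ for some $v \geq 1$, contradicting the definition of $\mathcal{K}^*$. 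Terms of lower $x$-degree in $\Delta_{h_1}\cdots\Delta_{h_s} f$ may be absorbed into an additive constant in $x$ once $(h_1,\ldots,h_s)$ is fixed.

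Once the inner polynomial has been reduced to an expression of the form $c \alpha_k h_1 \cdots h_s x + \Psi(h_1, \ldots, h_s)$, a direct application of the identity (\ref{kubota}) converts largeness of the inner sum into $\ord\{c \alpha_k h_1 \cdots h_s\} < -N + C'\eta$ for a positive proportion of tuples $(h_1, \ldots, h_s) \in \mathbb{G}_N^s$, with $C'$ depending only on $\mathcal{K}$ and $q$. Setting $g = c h_1 \cdots h_s$ immediately yields approximations $(g, a) \in \mathbb{F}_q[t]^2$ with $\ord g \leq sN$ and $\ord(g \alpha_k - a) < -(s+1)N + C'\eta$. The final step reduces the denominator size: the abundance of such approximations of varying degree allows one to form differences and quotients of $g$'s to produce a single $g_k$ of small degree while preserving approximation quality up to a loss of $\epsilon N$, which is the function-field analogue of the classical lemma converting many mediocre rational approximations into a single sharp one.

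The main obstacle will be the second step: engineering a differencing sequence so that the $\alpha_k$ contribution survives uncontaminated. The characteristic-$p$ combinatorics forces one to follow the base-$p$ digits of $k$ rather than differencing blindly, and verifying that both conditions of $\mathcal{K}^*$ (namely $p \nmid k$ and $p^v k \notin \mathcal{S}(\mathcal{K})$ for every $v \geq 1$) are exactly the right hypotheses requires a careful induction on the poset $(\mathcal{S}(\mathcal{K}), \preceq_p)$. Equally subtle is the bookkeeping needed to keep $c_k$ and $C_k$ depending only on $\mathcal{K}$ and $q$ throughout, particularly in the denominator-reduction step where one must avoid degeneracies arising when many $h_i$ share common factors.
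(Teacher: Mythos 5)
First, a point of order: the paper does not prove this statement at all --- it is Theorem 15 of L\^{e}--Liu \cite{LL}, imported as a black box (the author only proves the easy Corollary \ref{Cor LL main} from it). So there is no internal proof to compare against, and what you have written is a plan for reproving the cited result. Judged on its own terms, your plan follows the expected route (characteristic-$p$ Weyl differencing, with the shadow $\mathcal{S}(\mathcal{K})$ tracking which exponents can appear), but it has genuine gaps at exactly the points you flag as ``obstacles,'' and one of your intermediate claims is false as stated. The differenced polynomial $\Delta_{h_1}\cdots\Delta_{h_s}f$ does \emph{not} reduce to $c\,\alpha_k h_1\cdots h_s\,x + \Psi(h_1,\dots,h_s)$. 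Three separate problems: (i) the coefficient of $x$ contributed by $\alpha_k u^k$ is a sum over strictly decreasing chains $k \succ_p j_1 \succ_p \cdots \succ_p 1$ of monomials $h_1^{e_1}\cdots h_s^{e_s}$ with $e_1+\cdots+e_s = k-1$, not the degree-$s$ monomial $h_1\cdots h_s$; this changes your bound $\ord g \leq sN$ to roughly $(k-1)N$ and all subsequent bookkeeping. (ii) Other monomials $\alpha_r u^r$ with $p\nmid r$ and sufficiently large base-$p$ digit sum also contribute to the coefficient of $x$ for the same $s$ (take $\mathcal{K}=\{p+1,2\}$, both elements in $\mathcal{K}^*$: after one differencing the linear coefficient is $\alpha_{p+1}h^p + 2\alpha_2 h$); no single choice of $s$ isolates $\alpha_k$ in general, and the hypothesis $p^v k\notin\mathcal{S}(\mathcal{K})$ does not rule this out --- it addresses a different phenomenon. (iii) That phenomenon is the one you cannot absorb into a constant: the surviving terms $\gamma_{p^v}x^{p^v}$ are \emph{additive} in $x$ (Frobenius), so $\sum_x e(\gamma_1 x + \sum_v \gamma_{p^v}x^{p^v})$ is a character sum whose vanishing condition mixes $\gamma_1$ with twisted images of the $\gamma_{p^v}$. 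The hypothesis $p^v k\notin\mathcal{S}(\mathcal{K})$ is precisely what keeps $\alpha_k$ out of those higher coefficients, and any proof must engage with this untwisting; your sketch does not.

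The second gap is the last step. Passing from ``for a positive proportion of tuples $(h_1,\dots,h_s)$ the quantity $\ord\{\alpha_k P(h_1,\dots,h_s)\}$ is very negative'' to a \emph{single} pair $(g_k,a_k)$ with $\ord g_k \leq \epsilon N + C_k\eta$ is the analytic heart of the theorem, and you dispose of it in one sentence (``form differences and quotients of $g$'s''). This requires a genuine counting lemma (the $\mathbb{F}_q[t]$ analogue of the classical step from many mediocre approximations to one sharp one), it is where the hypothesis $\eta \leq c_k N$ is actually consumed, and it must be done in a way that keeps $c_k, C_k$ depending only on $\mathcal{K}$ and $q$. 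As written, the proposal is an outline of the right strategy with its two hardest steps left as acknowledged obstacles; it is not yet a proof.
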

We have the following corollary where we replace the polynomial $g_k \in \mathbb{F}_q[t]$ and constants $c_k,C_k >0$
in the statement of Theorem \ref{thm LL} with $g \in \mathbb{F}_q[t]$ and $c,C >0$, which are independent of
the choice of $k \in \mathcal{K}^*$, respectively.

\begin{cor}
\label{Cor LL main}
Let $f(u) = \sum_{r \in \mathcal{K} \cup \{ 0 \}} \alpha_r u^r$ be a polynomial supported on a set $\mathcal{K} \subseteq \mathbb{Z}^+$
with coefficients in $\mathbb{K}_{\infty}$. There exist
constants $c, C > 0$, depending only on $\mathcal{K}$ and $q$, such that the following holds:
suppose that for some $0 < \eta \leq c N$, we have
$$
\Big{|} \sum_{x \in \mathbb{G}_N}  e(f(x)) \Big{|} \geq q^{N- \eta}.
$$
Then for any $\epsilon > 0$ and $N$ sufficiently large in terms of $\mathcal{K}$, $\epsilon$ and $q$,
there exists $g \in \mathbb{F}_q[t]$ such that
$$
\ord \{ g \alpha_k \} < -k N + \epsilon N + C \eta \ \ ( k \in \mathcal{K}^*) \text{  and  } \ \ \ord g \leq \epsilon N + C \eta.
$$
\end{cor}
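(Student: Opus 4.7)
The plan is to obtain the uniform constants $c$ and $C$ and the single polynomial $g$ by applying Theorem \ref{thm LL} separately to each $k \in \mathcal{K}^*$ and then multiplying the resulting polynomials $g_k$ together. Since $\mathcal{K}^*$ is a finite subset of $\mathcal{K}$ (determined by $\mathcal{K}$ and $p$ alone), the quantities
$$
c := \min_{k \in \mathcal{K}^*} c_k, \qquad C_0 := \max_{k \in \mathcal{K}^*} C_k
$$
depend only on $\mathcal{K}$ and $q$. With $0 < \eta \leq cN$, the hypothesis $\eta \leq c_k N$ of Theorem \ref{thm LL} is satisfied for every $k \in \mathcal{K}^*$ simultaneously, so I obtain, for each such $k$ and for a small auxiliary $\epsilon' > 0$ to be chosen at the end, polynomials $a_k, g_k \in \mathbb{F}_q[t]$ with
$$
\ord(g_k \alpha_k - a_k) < -kN + \epsilon' N + C_0 \eta \quad \text{and} \quad \ord g_k \leq \epsilon' N + C_0 \eta.
$$

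I then set $g = \prod_{k \in \mathcal{K}^*} g_k$, which immediately gives $\ord g \leq |\mathcal{K}^*|(\epsilon' N + C_0 \eta)$. For any fixed $k \in \mathcal{K}^*$, write $G_k = \prod_{j \in \mathcal{K}^* \setminus \{k\}} g_j \in \mathbb{F}_q[t]$, so that
$$
g \alpha_k \;=\; G_k \cdot (g_k \alpha_k) \;=\; G_k a_k + G_k (g_k \alpha_k - a_k).
$$
Since $G_k a_k \in \mathbb{F}_q[t]$ and $\ord \{ \beta \} \leq \ord \beta$ for any $\beta \in \mathbb{K}_\infty$, I deduce
$$
\ord \{ g \alpha_k \} \;=\; \ord \{ G_k (g_k \alpha_k - a_k) \} \;\leq\; \ord G_k + \ord(g_k \alpha_k - a_k) \;<\; -kN + |\mathcal{K}^*|(\epsilon' N + C_0 \eta).
$$

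To finish, I choose $\epsilon' = \epsilon/|\mathcal{K}^*|$ and set $C = |\mathcal{K}^*| C_0$, both depending only on $\mathcal{K}$ and $q$. The two displayed bounds become precisely $\ord\{g \alpha_k\} < -kN + \epsilon N + C \eta$ for every $k \in \mathcal{K}^*$ and $\ord g \leq \epsilon N + C \eta$. The condition that $N$ be sufficiently large in terms of $\mathcal{K}$, $\epsilon$, and $q$ propagates from the analogous condition in Theorem \ref{thm LL} (applied with $\epsilon'$ in place of $\epsilon$). There is no substantive obstacle: the proof is essentially bookkeeping, and the only thing to watch is that taking the product $g = \prod_k g_k$ introduces a factor of $|\mathcal{K}^*|$ in both the $\epsilon N$ and the $\eta$ terms, which is absorbed by rescaling $\epsilon$ and enlarging $C$.
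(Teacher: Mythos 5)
Your proposal is correct and is essentially the same argument as the paper's: apply Theorem \ref{thm LL} to each $k \in \mathcal{K}^*$, take $c = \min_k c_k$, $C$ a multiple of $\max_k C_k$, and $g = \prod_{k} g_k$, then bound $\ord\{g\alpha_k\}$ via $\ord\bigl(g\alpha_k - a_k \prod_{j \neq k} g_j\bigr)$. Your explicit rescaling $\epsilon' = \epsilon/|\mathcal{K}^*|$ is a small tidiness improvement over the paper, which leaves the factor $|\mathcal{K}^*|\epsilon$ in place and relies on $\epsilon$ being arbitrary.
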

\begin{proof}
For each $k \in \mathcal{K}^*$, let $c_k, C_k$ be the constants, depending only on $\mathcal{K}$ and $q$, and $a_k$, $g_k$ be the polynomials from the statement
of Theorem \ref{thm LL}. Let $c = \min_{k \in \mathcal{K}^*} c_k$ and $C = \max_{k \in \mathcal{K}^*} C_k$.
We let $g = \prod_{k \in \mathcal{K}^*} g_k$ and $C' = | \mathcal{K}^* | C$.
Since $\ord g_k \leq \epsilon N + C_k \eta \ (k \in \mathcal{K}^*)$, it follows that
$$
\ord g \leq 
| \mathcal{K}^* | \epsilon N + C' \eta.
$$
We also obtain
$$
\ord \{ g \alpha_k \} \leq
\ord \left( g \alpha_k - a_k \prod_{j \in \mathcal{K}^* \backslash \{ k \}} g_j \right)
\leq - k N + | \mathcal{K}^* | \epsilon N + C' \eta.
$$
\end{proof}

We note that all of our main results, Theorems \ref{first theorem}, \ref{second theorem} and \ref{third theorem},
rely on Corollary \ref{Cor LL main}, which explains the reason for our assumptions on the coefficients of the polynomials
in these theorems.

\section{Basic Linear Algebra}
\label{basic lin alg}

In this section, we prove lemmas involving basic linear algebra which are utilized in the proofs of our main results.
Given a polynomial $f(u) \in \mathbb{K}_{\infty}[u]$, we use the notation
$[f]_{i}$ to mean the $u^i$ coefficient of $f$.
We have the following lemma, which is an analogue of \cite[Lemma 1]{LS}.
\begin{lem}
\label{matrix lemma}
Suppose $d,s \in \mathbb{F}_q[t]$, $d \not = 0$, and $f_1, ..., f_L \in \mathbb{F}_q[t][u]$
with $\deg f_1 < ... < \deg f_L$. There exist polynomials $g_1, ..., g_L \in \mathbb{F}_q[t][u]$,
depending on $d$ and $s$,
and an $L \times L$ matrix $\mathcal{A}$ with entries in $\mathbb{F}_q[t]$ satisfying the following properties:
\newline
\newline
(1) $\mathcal{A}
\left(\begin{array}{c}
f_1(d u + s) \\ \vdots \\ f_L(d u + s)\\
\end{array} \right)
=
\left(\begin{array}{c}
g_1(u ) \\ \vdots \\ g_L(u)\\
\end{array} \right)$
\newline
\newline
(2) $\mathcal{A}$ is lower triangular with entries in $\mathbb{F}_q[t]$. All its diagonal entries
are equal to a constant $c \in \mathbb{F}_q[t]$ depending only on $f_1, .., f_L$.
In fact, every entry of $\mathcal{A}$ is dependent at most on $s$ and $f_1, .., f_L$.
\newline
\newline
(3) We have $[g_i]_{\deg g_j} = 0$ if $i \not = j$. Also,
$\deg g_j = \deg f_j$ and $[g_j]_{\deg g_j} = c d^{\deg f_j} [f_j]_{deg f_j}$ for all $1 \leq j \leq L$.
\end{lem}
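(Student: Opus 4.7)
The plan is to build the matrix $\mathcal{A}$ by Gauss--Jordan elimination applied to the system $\tilde{f}_j(u) := f_j(du+s)$, proceeding row by row and, within each row, from the rightmost column leftward. Write $c_j := [f_j]_{\deg f_j}$ for the leading coefficient of $f_j$. Expanding via the binomial theorem gives the key identity
$$
[\tilde{f}_j]_k \;=\; d^k \sum_{r \geq k} [f_j]_r \binom{r}{k} s^{r-k} \;=:\; d^k\, P_{j,k}(s),
$$
with each $P_{j,k}(s) \in \mathbb{F}_q[t][s]$ depending only on $f_j$ and $s$. In particular $[\tilde f_j]_k = 0$ for $k > \deg f_j$, $[\tilde f_j]_{\deg f_j} = c_j d^{\deg f_j}$, and every $d$-dependence in a coefficient of $\tilde f_j$ reduces to a single monomial factor $d^k$.

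Next, I would define the entries $m_{ij}$ of $\mathcal{A}$ by setting $m_{ij} = 0$ for $j > i$, $m_{ii} = c$ for a constant $c \in \mathbb{F}_q[t]$ (depending only on the $f_j$'s) to be chosen below, and, for $j < i$, determining $m_{ij}$ by forcing the coefficient of $u^{\deg f_j}$ in $g_i := \sum_{\ell=1}^{i} m_{i\ell}\, \tilde{f}_\ell$ to vanish. Since $[\tilde f_\ell]_{\deg f_j} = 0$ whenever $\deg f_\ell < \deg f_j$, this condition reduces, after stripping the common factor $d^{\deg f_j}$, to the recursion
$$
m_{ij} \;=\; -\frac{1}{c_j} \sum_{\ell = j+1}^{i} m_{i\ell}\, P_{\ell,\deg f_j}(s),
$$
which is manifestly free of $d$ and depends only on $s$ and the $f_j$'s.

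Finally, one must ensure that each $m_{ij}$ actually lies in $\mathbb{F}_q[t]$ rather than in $\mathbb{K}$. A straightforward downward induction on $j$ (starting from $j = i$) shows that $m_{ij} = \dfrac{c}{c_j c_{j+1} \cdots c_{i-1}}\, Q_{ij}(s)$ for some $Q_{ij} \in \mathbb{F}_q[t][s]$, so the choice
$$
c \;:=\; \prod_{k=1}^{L-1} c_k
$$
clears every denominator, since each $c_j c_{j+1}\cdots c_{i-1}$ appearing is a subproduct of $c$; moreover $c$ depends only on $f_1,\dots,f_L$. This yields property (2), and property (1) is built into the construction. For (3), the recursion forces $[g_i]_{\deg f_j} = 0$ for $j < i$; for $j > i$ it is automatic since $\deg g_i \leq \deg f_i < \deg f_j$; and $[g_j]_{\deg g_j} = c\, c_j\, d^{\deg f_j}$ follows immediately from $m_{jj} = c$ together with the vanishing of lower-index contributions at degree $\deg f_j$.

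The step I expect to be the main obstacle is the bookkeeping in this last paragraph: one has to pass from the natural Gauss--Jordan reduction over the field $\mathbb{K}$ back into the ring $\mathbb{F}_q[t]$, which requires tracking which denominators can accumulate through the nested recursion. The crucial saving is that all of the $d$-dependence cancels within the recursion, so the denominators are composed purely of the leading coefficients $c_k$ and can therefore be absorbed uniformly by the single multiplicative factor $c$.
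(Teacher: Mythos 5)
Your proof is correct and follows essentially the same route as the paper: Gaussian elimination on the $f_j(du+s)$, with the key observations that the elimination coefficients are independent of $d$ (since each $u^k$-coefficient of $f_j(du+s)$ carries a clean factor $d^k$) and that the only denominators arising are leading coefficients of the $f_j$, which a single constant $c$ clears. The only difference is cosmetic: the paper works over $\mathbb{K}$ with unit diagonal and multiplies by the common denominator at the end, whereas you track the denominators explicitly and take $c = \prod_k [f_k]_{\deg f_k}$ from the start.
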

\begin{proof}
Let $\mathcal{A}' = (a_{i,j})$ be a lower triangular matrix with all entries on the main
diagonal equal to $1$. For each $1 \leq i \leq L$, one can successively select elements
in $\mathbb{K}$, $a_{i,i-1}, ... , a_{i,1}$ so that in the polynomial
$$
h_{i}(u) = a_{i,1} f_1(du + s) + a_{i,2} f_2(du + s)  + ... + a_{i,i-1} f_{i-1}(du + s) + f_i(du + s),
$$
the coefficient of $u^{\deg f_j}$ is $0$ for every $j < i$.
We prove by induction that $a_{i,j} \ (j < i)$ depend only on $s$ and $f_1, ..., f_L$, and
that their denominators  
depend only on $f_1, ..., f_L$.
Fix $1 \leq i \leq L$. For the base case $j = i-1$, we have
\begin{eqnarray}
0 &=& [h_i]_{\deg f_{i-1}}
\notag
\\
&=&  [ a_{i,i-1} f_{i-1}(du + s) + f_i(du+s) ]_{\deg f_{i-1}}
\notag
\\
&=&  a_{i,i-1} [f_{i-1}]_{\deg f_{i-1}} d^{\deg f_{i-1}} +
\sum_{l = \deg f_{i-1}}^{\deg f_{i}} [f_{i}]_l \ {l \choose \deg f_{i-1}} \ d^{\deg f_{i-1}} s^{l - \deg f_{i-1}}.
\notag
\end{eqnarray}
By rearraging the last equality above, we obtain the following equaiton
$$
a_{i,i-1} = \frac{-1}{[f_{i-1}]_{\deg f_{i-1}}} \sum_{l = \deg f_{i-1}}^{\deg f_{i}}
[f_{i}]_l \ {l \choose \deg f_{i-1}} \ s^{l - \deg f_{i-1}},
$$
which we deduce our base case from.
Suppose the statement holds for $j_0 < j < i$.
Then we have by similar calculations as above and the induction hypothesis that
\begin{eqnarray}
0 &=& [h_i]_{\deg f_{j_0}}
\notag
\\
&=&  [ a_{i,j_0} f_{j_0}(du + s) + ... + a_{i,i-1} f_{i-1}(du + s) + f_i(du+s) ]_{\deg f_{j_0}}
\notag
\\
&=& 
a_{i,j_0} d^{\deg f_{j_0}} [f_{j_0}]_{\deg f_{j_0}} + d^{\deg f_{j_0}} \ \widetilde{a},
\notag
\end{eqnarray}
where $\widetilde{a} \in \mathbb{K}$ depends only on $s$ and $f_1, ..., f_L$,
and its denominator depends only on $f_1, ..., f_L$.
We then obtain our claim for $j = j_0$ by rearranging the last equation displayed above.
Let $c \in \mathbb{F}_q[t]$ be the common denominator of the non-zero entries in $\mathcal{A}'$; the matrix $\mathcal{A} = c \mathcal{A}'$
and the polynomials $g_j (u)= c h_j(u) \ (1 \leq j \leq L)$ satisfy the desired properties.
\end{proof}

By Lemma \ref{matrix lemma}, we obtain Lemmas \ref{cor to matrix lemma}
and \ref{second cor to matrix lemma} which involve polynomials with
$\mathcal{K}^*$-portion and maximal $\mathcal{K}^*$-portion, respectively,
that are linearly independent.

\begin{lem}
\label{cor to matrix lemma}
Let $h_j \in \mathbb{F}_{q}[t] [u]$ be supported on a set $\mathcal{K}_j \subseteq \mathbb{Z}^+ \ (1 \leq j \leq L)$,
and let $\mathcal{K} = \mathcal{K}_1 \cup ... \cup \mathcal{K}_L$.
Suppose the $\mathcal{K}^*$-portion of $( h_j )_{j=1}^L$ is linearly independent.
Let $\beta_1, ..., \beta_L \in \mathbb{K}_{\infty}$. 
Then we can find an $L \times L$ matrix $\mathcal{T}$ with entries in $\mathbb{F}_q[t]$ and
$g_j \in \mathbb{F}_{q}[t] [u] \ (1 \leq j \leq L)$ 
with the following properties:
\newline
\newline
(1) $g_j$ is a polynomial supported on a subset of $\mathcal{K}$.
\newline
\newline
(2) $\mathcal{T}  
\left(\begin{array}{c}
h_1(u ) \\ \vdots \\ h_L(u)\\
\end{array} \right)
=
\left(\begin{array}{c}
g_1(u ) \\ \vdots \\ g_L(u)\\
\end{array} \right)$.
\newline
\newline
(3) There exist $T_j \in \mathcal{K}^* \ (1 \leq j \leq L)$ such that
$[g_i]_{T_j} = 0$ if $i \not = j$.
\newline
\newline
(4) There exist $\gamma_j \in \mathbb{K}_{\infty} \ (1 \leq j \leq L)$ such that
$$
\beta_1 h_1(u) + ... + \beta_L h_L(u) = \gamma_1 g_1(u) + ... + \gamma_L g_L(u).
$$
\end{lem}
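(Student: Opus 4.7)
The plan is to reduce the claim to a Gaussian elimination over the field $\mathbb{K}$ applied to a suitably chosen $L\times L$ coefficient submatrix. Since $h_1^*,\ldots,h_L^*$ are linearly independent over $\mathbb{K}$, their coefficient vectors (indexed by $\mathcal{K}^*$) span an $L$-dimensional subspace of $\mathbb{K}^{|\mathcal{K}^*|}$, so there exist exponents $T_1,\ldots,T_L\in\mathcal{K}^*$ for which the $L\times L$ submatrix
\[
M=\bigl([h_i]_{T_j}\bigr)_{1\le i,j\le L}
\]
has nonzero determinant in $\mathbb{K}$ (equivalently in $\mathbb{F}_q[t]$). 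These will serve as the $T_j$'s in part (3).

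Next, I would row-reduce $M$ over $\mathbb{K}$. Standard Gaussian elimination (without row swaps, by choosing pivots in the order $T_L,T_{L-1},\ldots,T_1$, or more symmetrically after extracting $M^{-1}$ itself) yields a matrix $\mathcal{T}_0\in\mathrm{GL}_L(\mathbb{K})$ such that $\mathcal{T}_0 M$ is diagonal with nonzero diagonal entries. Because the entries of $\mathcal{T}_0$ lie in $\mathbb{K}=\mathbb{F}_q(t)$, there is a nonzero common denominator $c\in\mathbb{F}_q[t]$ such that $\mathcal{T}:=c\,\mathcal{T}_0$ has all entries in $\mathbb{F}_q[t]$; the matrix $\mathcal{T}$ remains invertible over $\mathbb{K}$, and $\mathcal{T}M$ is still diagonal with nonzero diagonal entries.

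With $\mathcal{T}$ in hand, define $(g_1,\ldots,g_L)^T=\mathcal{T}(h_1,\ldots,h_L)^T$; this is (2). Part (1) follows because each $h_j$ is supported on $\mathcal{K}_j\subseteq\mathcal{K}$ and any $\mathbb{F}_q[t]$-linear combination of the $h_j$'s is again supported on $\mathcal{K}$. Part (3) is immediate from the construction, since
\[
[g_i]_{T_j}=\sum_{k=1}^{L}\mathcal{T}_{ik}[h_k]_{T_j}=(\mathcal{T}M)_{ij},
\]
which vanishes for $i\neq j$. For (4), inverting $\mathcal{T}$ over $\mathbb{K}$ gives $(h_1,\ldots,h_L)^T=\mathcal{T}^{-1}(g_1,\ldots,g_L)^T$ with coefficients in $\mathbb{K}\subset\mathbb{K}_\infty$, so setting $(\gamma_1,\ldots,\gamma_L)=(\beta_1,\ldots,\beta_L)\mathcal{T}^{-1}\in\mathbb{K}_\infty^L$ yields $\sum_j\beta_j h_j=\sum_j\gamma_j g_j$.

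I do not anticipate a real obstacle here: the only mild point is insisting on entries in $\mathbb{F}_q[t]$ rather than in $\mathbb{K}$, which is handled by the denominator-clearing step above. The assumption that the $\mathcal{K}^*$-portions are linearly independent over $\mathbb{K}$ is used precisely once, and only to guarantee the existence of the exponents $T_1,\ldots,T_L\in\mathcal{K}^*$ for which $\det M\neq 0$; after that the argument is purely matrix algebra.
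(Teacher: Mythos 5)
Your proof is correct, and the underlying idea---row-reduce the matrix of $\mathcal{K}^*$-coefficients over $\mathbb{K}$, then clear denominators to land back in $\mathbb{F}_q[t]$---is the same as the paper's, but your execution is more direct. The paper first finds an invertible matrix $\mathcal{B}$ over $\mathbb{K}$ turning $(h_1^{*},\dots,h_L^{*})$ into polynomials $b_1,\dots,b_L$ of distinct degrees supported on $\mathcal{K}^*$, then feeds these into Lemma \ref{matrix lemma} (with $d=1$, $s=0$) to triangularize and take $T_j=\deg b_j$, and finally tracks the contribution of $h_j-h_j^{*}$ separately (harmless, since it has no coefficients at exponents in $\mathcal{K}^*$). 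You bypass all of this by choosing a nonsingular $L\times L$ submatrix $M=\bigl([h_i]_{T_j}\bigr)$ of the $\mathcal{K}^*$-coefficient matrix---which exists precisely because the $h_j^{*}$ are linearly independent over $\mathbb{K}$, and note $[h_i]_{T_j}=[h_i^{*}]_{T_j}$ for $T_j\in\mathcal{K}^*$---and taking $\mathcal{T}$ to be a denominator-cleared multiple of $M^{-1}$. Both routes deliver identical conclusions; the paper's detour through Lemma \ref{matrix lemma} seems motivated by the companion Lemma \ref{second cor to matrix lemma}, where the substitution $u\mapsto du+s$ makes the degree-ordering and triangularization machinery genuinely necessary, whereas for the present statement your shortcut suffices. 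One remark worth recording: your construction also guarantees $[g_j]_{T_j}\neq 0$ (the diagonal entries of $\mathcal{T}M$ are nonzero), a property not stated in the lemma but implicitly used later when verifying that each $T_j$ lies in $(\mathcal{\widehat{K}})^*$ in the proof of Theorem \ref{second theorem}.
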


\begin{proof}
By the hypothesis, the polynomials $\{ h_j^{*} \}_{j=1}^L$
are linearly independent over $\mathbb{K}$. Therefore, we can find an $L \times L$ invertible matrix $\mathcal{B}$
with entries in $\mathbb{K}$ such that
$$
\mathcal{B} \ (h_1^{*}, ..., h_L^{*} )^T =
(b_1, ..., b_L)^T,
$$
where $b_j \in \mathbb{F}_q[t][u]$ with coefficients
supported on a subset of $\mathcal{K}^*$ and $\deg b_1 < ... < \deg b_L$. 
Let $\mathcal{A}$ and $g'_1, ..., g'_L$ be the matrix and polynomials, respectively, obtained by applying
Lemma \ref{matrix lemma} to the polynomials $b_1, ..., b_L$ with $d=1$ and $s=0$.
It follows that the polynomials $g'_j$ have coefficients supported on a subset of $\mathcal{K}^*$.
Let $T_j = \deg g'_j = \deg b_j \in \mathcal{K}^* (1 \leq j \leq L)$.
Also let
$$
(g''_1, ..., g''_L)^T = \mathcal{A} \mathcal{B} \ (h_1 - h_1^{*}, ..., h_L - h_L^{*} )^T
$$
and
$$
g_j := g'_j + g''_j \ (1 \leq j \leq L).
$$
Let $c_j$ be the common denominator of the coefficients of $g_j \in \mathbb{K}[u] \ (1 \leq j \leq L)$,
$c'$ be the common denominator of the matrix $\mathcal{AB}$, and
$c = c' \prod_{j=1}^L c_j$.
By construction, we see that
$c g_j$ is a polynomial in $\mathbb{F}_q[t][u]$ with coefficients supported
on a subset of $\mathcal{K}$,
$$
(c g_j)^{*} = c (g_j^{*}) = c g'_j,
$$
and
$$
( c g_1(u), ... , c g_L(u) )^T = c \mathcal{AB} \ ( h_1(u), ... ,  h_L(u) )^T. 
$$
Since $[g'_i]_{T_j} = 0$ if $i \not = j$, it follows that $[c g_i]_{T_j} = 0$ if $i \not = j$.
Let
$$
(\gamma_1, ..., \gamma_L) = (\beta_1, ..., \beta_L) \ (c \mathcal{A} \mathcal{B})^{-1}.
$$
Then we have
$$
\gamma_1 c g_1(u) + ... + \gamma_L c g_L(u) = \beta_1 h_1(u) + ... + \beta_L h_L(u).
$$
Therefore, we see that the matrix $c \mathcal{A} \mathcal{B}$ and the polynomials $c g_j \ (1 \leq j \leq L)$
satisfy the desired properties.
\end{proof}

Let $f(u) = \sum_{r \in \mathcal{K} \cup \{ 0 \}} \alpha_r u^r$ be a polynomial supported on a set $\mathcal{K} \subseteq \mathbb{Z}^+$ with coefficients in $\mathbb{K}_{\infty}$. For any $r \in \mathcal{K}$ and $y, s \in \mathbb{F}_q[t]$, we have
$$
(y + s)^r = \sum_{j \preceq_p r} {r \choose j} y^j s^{r-j} + s^r.
$$
Therefore, for a fixed $s$, if $k$ is maximal in $\mathcal{K}$,
then there exist $\alpha'_j = \alpha'_j(\{ \alpha_r \}_{r \in \mathcal{K}};s) \in
\mathbb{K}_{\infty} \ (j \in \mathcal{S(K)} \backslash \{ k \})$ and
$\alpha'_0 = \alpha'_0(\{ \alpha_r \}_{r \in \mathcal{K} \cup \{ 0 \}} ;s) \in \mathbb{K}_{\infty}$
such that
$$
f(y + s) = \alpha_k (y+s)^k + \sum_{r \in \mathcal{K} \backslash \{ k \}} \alpha_r (y+s)^r + \alpha_0
= \alpha_k y^k + \sum_{j \in \mathcal{S(K)} \backslash \{ k \}} \alpha'_j y^j + \alpha'_0.
$$
In other words, the $y^k$ coefficient of $f(y)$ and $f(y+s)$ are the same.
Therefore, it follows that if $k_1, ..., k_M$ are maximal in $\mathcal{K}$, then
\begin{equation}
\label{maximal stuff}
f(y + s) = \sum_{i=1}^M \alpha_{k_i} y^{k_i} + \sum_{j \in \mathcal{S(K)} \backslash \{ k_1, ..., k_M \} } \alpha'_j y^j + \alpha'_0.
\end{equation}

\begin{lem}
\label{second cor to matrix lemma}
Let $h_j \in \mathbb{F}_{q}[t] [u]$ be supported on a set $\mathcal{K}_j \subseteq \mathbb{Z}^+ \ (1 \leq j \leq L)$, and
let $\mathcal{K} = \mathcal{K}_1 \cup ... \cup \mathcal{K}_L$.
Suppose the maximal $\mathcal{K}^*$-portion of $( h_j )_{j=1}^L$ is linearly independent.
Let $\beta_1, ...,  \beta_L \in \mathbb{K}_{\infty}$ and $s, d \in \mathbb{F}_q[t]$ with $d \not = 0$.
Then we can find
$g_j \in \mathbb{F}_{q}[t] [u] \ (1 \leq j \leq L)$, depending on $d$ and $s$, and
an $L \times L$ matrix $\mathcal{T}$ with entries in $\mathbb{F}_q[t]$
with the following properties:
\newline
\newline
(1) $g_j$ is a polynomial supported on a subset of $\mathcal{S}(\mathcal{K})$ and every entry of $\mathcal{T}$ depends
only on $h_1, ..., h_L$.
\newline
\newline
(2) $\mathcal{T}  
\left(\begin{array}{c}
h_1(du+s) \\ \vdots \\ h_L(du+s)\\
\end{array} \right)
=
\left(\begin{array}{c}
g_1(u ) \\ \vdots \\ g_L(u)\\
\end{array} \right)$.
\newline
\newline
(3) For $x \in \mathbb{F}_q[t]$, we have
$$
\ord g_j(x) \leq \left( \max_{1\leq j \leq L} \deg h_j \right) \ \ord (dx + s) + D,
$$
where $D$ is some constant dependent only on $h_1, ..., h_L$.
\newline
\newline
(4) There exist $T_j \in \mathcal{K}^* \ (1 \leq j \leq L)$
such that $T_j$ is maximal in $\mathcal{K}$ and $[g_i]_{T_j} = 0$ if $i \not = j$.
Moreover, we have $[g_j]_{T_j} = \tilde{c}_j d^{T_j}$ for some $\tilde{c}_j \in \mathbb{F}_q[t]$ dependent
only on $h_1, ..., h_L$.
\newline
\newline
(5) There exist $\gamma_j \in \mathbb{K}_{\infty} \ (1 \leq j \leq L)$ such that
$$
\beta_1 h_1(du+s) + ... + \beta_L h_L(du+s) = \gamma_1 g_1(u) + ... + \gamma_L g_L(u).
$$
\end{lem}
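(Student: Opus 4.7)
The strategy is to use the linear independence of the maximal $\mathcal{K}^{*}$-portion to construct a single matrix $\mathcal{T}$ with entries in $\mathbb{F}_q[t]$ that depends only on $h_1, \ldots, h_L$ and works uniformly in $d, s$. The substitution $u \mapsto du+s$ is then handled directly, using the combinatorial behavior of maximal exponents under the binomial expansion, so Lemma \ref{matrix lemma} itself is not needed in this setting; the maximality hypothesis is precisely what makes this cleaner form possible.

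Concretely, let $\mathcal{K}^{\text{max}} := \{k \in \mathcal{K}^{*} : k \text{ maximal in } \mathcal{K}\}$. Since $h_1^{\text{max}}, \ldots, h_L^{\text{max}}$ are linearly independent over $\mathbb{K}$, the $L \times |\mathcal{K}^{\text{max}}|$ coefficient matrix $\bigl([h_j^{\text{max}}]_k\bigr)_{j,k}$ has rank $L$, so I can choose $L$ distinct exponents $T_1, \ldots, T_L \in \mathcal{K}^{\text{max}}$ for which $M := \bigl([h_j^{\text{max}}]_{T_i}\bigr)_{j,i}$ is invertible over $\mathbb{K}$. Set $\mathcal{B} := M^{-1}$, so that $\sum_k \mathcal{B}_{jk}[h_k^{\text{max}}]_{T_i} = \delta_{ji}$, and after multiplying by a common denominator $c \in \mathbb{F}_q[t]$ depending only on $h_1, \ldots, h_L$, let $\mathcal{T} := c\mathcal{B}$. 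Then define $g_j(u) := \sum_k \mathcal{T}_{jk}\, h_k(du+s)$, which gives (2) and (1) at once, since each $h_k(du+s)$ is supported on $\mathcal{S}(\mathcal{K}_k) \subseteq \mathcal{S}(\mathcal{K})$.

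To verify (4), I would expand $(du+s)^r = \sum_{l \preceq_p r}\binom{r}{l}(du)^l s^{r-l}$ and use the key combinatorial fact that if $r \in \mathcal{K}$ and $T_i \preceq_p r$, then maximality of $T_i$ forces $r \preceq_p T_i$, hence $r = T_i$. Thus the coefficient of $u^{T_i}$ in $h_k(du+s)$ is exactly $d^{T_i}[h_k]_{T_i}$, and
$$
[g_j]_{T_i} \;=\; d^{T_i}\sum_k \mathcal{T}_{jk}[h_k]_{T_i} \;=\; c\,d^{T_i}[h_j^{\text{max}}\text{-reduction}]_{T_i} \;=\; c\,d^{T_i}\delta_{ji},
$$
proving (4) with $\tilde c_j = c$. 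Property (5) is immediate upon setting $(\gamma_1, \ldots, \gamma_L) := (\beta_1, \ldots, \beta_L)\mathcal{T}^{-1}$ (which lives in $\mathbb{K}_{\infty}^L$ since $\mathcal{T}$ is invertible over $\mathbb{K}$). Finally, (3) follows from $g_j(x) = \sum_k \mathcal{T}_{jk} h_k(dx+s)$ together with the crude estimate $\ord h_k(y) \leq (\deg h_k)\,\ord y + \max_i \ord [h_k]_i$ for $y \in \mathbb{F}_q[t] \setminus \{0\}$, the additive constants depending only on the $\mathcal{T}_{jk}$ and the $[h_k]_i$, hence only on $h_1, \ldots, h_L$. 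The only step I expect to require any real care is the $\preceq_p$-maximality argument driving (4), which is the crux that makes the diagonal structure survive the substitution; everything else reduces to routine linear algebra.
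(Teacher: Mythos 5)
Your proof is correct, and it takes a genuinely more direct route than the paper's. The paper proceeds in two stages: it first finds an invertible matrix $\mathcal{B}$ over $\mathbb{F}_q[t]$ turning $(h_1^{\text{max}},\dots,h_L^{\text{max}})$ into polynomials $b_1,\dots,b_L$ of distinct degrees, and then feeds $b_1,\dots,b_L$ into the Gaussian-elimination Lemma \ref{matrix lemma} (with $s=0$) to produce a lower-triangular $\mathcal{A}$, taking $\mathcal{T}=\mathcal{A}\mathcal{B}$ and $T_j=\deg b_j$; the off-diagonal vanishing $[g_i]_{T_j}=0$ is then inherited from the successive elimination of leading coefficients. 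You instead observe that linear independence of the $h_j^{\text{max}}$ makes the coefficient matrix $\bigl([h_j^{\text{max}}]_k\bigr)$ of full rank, pick a pivot set $T_1,\dots,T_L$ of maximal exponents with invertible submatrix $M$, and take $\mathcal{T}=cM^{-1}$ in one step, which bypasses Lemma \ref{matrix lemma} entirely. Both arguments hinge on the same combinatorial fact (the paper's display (\ref{maximal stuff})): for $T_i$ maximal in $\mathcal{K}$, Lucas plus antisymmetry of $\preceq_p$ force the $u^{T_i}$-coefficient of $h_k(du+s)$ to be exactly $d^{T_i}[h_k]_{T_i}$, which is what lets the diagonal structure and the precise form $[g_j]_{T_j}=\tilde c_j d^{T_j}$ survive the substitution; you state and use this correctly. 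Your verifications of (1), (2), (3) and (5) are routine and sound (note only that the bound in (3) degenerates when $dx+s=0$, but that edge case is already present in the lemma as stated and is harmless in its application). Your construction even gives the slightly cleaner conclusion $\tilde c_j=c$ for all $j$, whereas the paper's yields $\tilde c_j = c\,[b_j]_{T_j}$; nothing downstream depends on the difference.
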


\begin{proof}
Let $h_j(u) = \sum_{r \in \mathcal{K}_j \cup \{ 0 \}} c_{j,r} u^r$ for $1 \leq j \leq L$.
We also let $\mathcal{H}_j = \{r \in \mathcal{K}_j \cap \mathcal{K}^*:
r \text{ is maximal in } \mathcal{K} \}$ so that
$
h_j^{\text{max}} (u) = \sum_{
r \in \mathcal{H}_j } c_{j,r} u^r.
$
Let $\mathcal{H} = \mathcal{H}_1 \cup ... \cup \mathcal{H}_L$.
We have by ~(\ref{maximal stuff}), the maximality condition of $r \in \mathcal{H}_j$, that
\begin{eqnarray}
\notag
h_j(d u + \widetilde{s}) = \sum_{r \in \mathcal{H}_j } c_{j,r} (du)^r
+ \sum_{ v \in \mathcal{S}(\mathcal{K}_j) \backslash \mathcal{H}_j } c'_{j,v} u^v + c'_{j,0}
\end{eqnarray}
for some $c'_{j,0}, c'_{j,v} \in \mathbb{F}_q[t] \ (1 \leq j \leq L, v \in \mathcal{S}(\mathcal{K}_j) \backslash \mathcal{H}_j).$
For any $l \not = j$, we have
$(\mathcal{S}(\mathcal{K}_j) \backslash \mathcal{H}_j )\cap \mathcal{H}_l = \varnothing$.
Suppose $v \in (\mathcal{S}(\mathcal{K}_j) \backslash \mathcal{H}_j) \cap \mathcal{H}_l$.
Since $v$ is maximal in $\mathcal{K}$, the only way $v$ can
be an element of $\mathcal{S}(\mathcal{K}_j)$ is if $v \in \mathcal{K}_j$. However, this forces
$v \in \mathcal{H}_j$ which is a contradiction.
Therefore, we can in fact write $h_j(d u + \widetilde{s})$ as
\begin{eqnarray}
\label{h_j(du+s)}
h_j(d u + \widetilde{s}) = h_j^{\text{max}} (d u)
+ \sum_{ v \in \mathcal{S}(\mathcal{K}) \backslash \mathcal{H} } c'_{j,v} u^v + c'_{j,0}.
\end{eqnarray}

By the hypothesis, the polynomials $\{ h^{ \text{max} }_j \}_{j=1}^L$ are linearly independent over $\mathbb{K}$.
Therefore, we can find an $L \times L$ invertible matrix $\mathcal{B}$
with entries in $\mathbb{F}_q[t]$ such that
$$
\mathcal{B} \ ({h}^{\text{max}}_1, ..., {h}^{\text{max}}_L)^T =
(b_1, ..., b_L)^T,
$$
where $b_j \in \mathbb{F}_q[t][u] \ (1 \leq j \leq L)$ with coefficients
supported on a subset of $\mathcal{H}$ and $\deg b_1 < ... < \deg b_L$. 
The entries of the matrix $\mathcal{B}$ and the polynomials $b_1, ..., b_L$ are dependent only on
${h}^{\text{max}}_1, ..., {h}^{\text{max}}_L.$
Clearly we have
$$
\mathcal{B} \ ({h}^{\text{max}}_1(du), ..., {h}^{\text{max}}_L(du))^T =
(b_1(du), ..., b_L(du))^T.
$$
Let $\mathcal{A}$ and $g'_1, ..., g'_L$ be the matrix and polynomials, respectively, obtained by applying
Lemma \ref{matrix lemma} to the polynomials $b_1, ..., b_L$ with 
$s=0$ and $d$. It follows that the coefficients of $g'_j \ (1 \leq j \leq L)$ are supported on a subset of $\mathcal{H}$.
Note by $(2)$ of Lemma \ref{matrix lemma}, the entries of $\mathcal{A}$ depend only
on $h_1, ..., h_L$.
Let $T_j = \deg g'_j = \deg b_j \in \mathcal{H} \ (1 \leq j \leq L)$. We have by $(3)$ of Lemma \ref{matrix lemma}
that $[g'_j]_{T_j} = cd^{T_j}[b_j]_{T_j}$ for some $c \in \mathbb{F}_q[t]$ dependent only
on $b_1, ..., b_L$, and $[g'_i]_{T_j} = 0$ if $i \not = j$.
Let
$$
(g''_1, ..., g''_L)^T = \mathcal{A} \mathcal{B} \ (h_1 (du + \widetilde{s}) - {h}^{\text{max}}_1(du), ..., h_L(du + \widetilde{s}) - {h}^{\text{max}}_L(du) )^T.
$$
We define polynomials $g_j$ by
$$
g_j := g'_j + g''_j \ (1 \leq j \leq L),
$$
then we have
\begin{equation}
\label{g to h}
(  g_1(u), ... ,  g_L(u) )^T =  \mathcal{AB} \ ( h_1(du+\widetilde{s}), ... ,  h_L(du+\widetilde{s}) )^T. 
\end{equation}
By construction, we see that
$g_j$ and $g''_j $ are polynomials in $\mathbb{F}_q[t][u]$ with coefficients supported
on a subset of $\mathcal{S}(\mathcal{K})$ and 
a subset of $\mathcal{S}(\mathcal{K}) \backslash \mathcal{H}$, respectively.
Then $(4)$ of this lemma follows by the fact that $[g_i]_{T_j} =[g'_i]_{T_j} \ (1 \leq i,j \leq L).$

Let
$$
(\gamma_1, ..., \gamma_L) = (\beta_1, ..., \beta_L) \ (\mathcal{A} \mathcal{B})^{-1}.
$$
Then we have
$$
\gamma_1 g_1(u) + ... + \gamma_L g_L(u) = \beta_1 h_1(du+\widetilde{s}) + ... + \beta_L h_L(du+\widetilde{s}).
$$

Finally, recall from above that the entries of matrices $\mathcal{A}$ and $\mathcal{B}$ are dependent only on $h_1, ..., h_L$.
Then $(3)$ of this lemma follows easily from  $(\ref{g to h})$.
\end{proof}

\section{Proof of the Main Results}
\label{proving}
We have collected enough material in the previous sections to prove our main results of the paper.
We begin this section by proving Theorems \ref{first theorem} and \ref{second theorem}.
\begin{proof}[Proof of Theorem \ref{first theorem}]
Let $\beta$ be an arbitrary element in $\mathbb{K}_{\infty}$.
Let $M = \lfloor \theta N \rfloor + 1$, where $\theta$ is a sufficiently small positive number to
be chosen later. We prove by contradiction that for any $N$ sufficiently large,
$$
\min_{x \in \mathbb{G}_N} \ord \{ \beta h(x) \}   \leq {-M} \leq  {- \theta N}.
$$
Suppose for some $N$ sufficiently large in terms of $\mathcal{K}$, $q$, $\theta$, and $\ord c_k$, we have
$\ord \{ \beta h(x) \}  > -M$ for all $x \in \mathbb{G}_N$.
Then by Lemma \ref{first lemma}, there exists $y \in \mathbb{G}_M \backslash \{ 0 \}$ such that
$$
\Big{|} \sum_{x \in \mathbb{G}_N}  e(y \beta h(x)) \Big{|} \geq \frac{q^N}{q^M-1}> q^{N-M}.
$$
It follows by Corollary \ref{Cor LL main} that for $\theta < c$ there exists $g \in \mathbb{F}_q[t]$ such that
$\ord g < C M$ and $\ord \{ g y \beta c_k \} \leq -k N +  C M$
for some constants $c, C>0$, depending only on $\mathcal{K}$ and $q$.
By the hypothesis, we know there exists $x \in \mathbb{G}_{\ord (g y c_k)}$
such that $h(x)\equiv 0 \ (\text{mod }g y c_k)$. Since $N$ is sufficiently large, by taking $\theta < 1 / (C+1)$ we have
$$
\ord x < \ord (g y c_k) < CM + M + \ord c_k  \leq N.
$$
We denote by $D$ some constant
dependent only on $h$. We have
\begin{eqnarray}
\ord \{ \beta h(x) \}  &\leq&
\min_{z \in \mathbb{F}_q[t]} \ord  \left( \beta h(x) - \frac{h(x)}{g y c_k} z  \right)
\notag
\\
&=& \ord \left( \frac{h(x)}{g y c_k} \right) + \ord \{ g y \beta c_k \}
\notag
\\
&\leq&  D + (\ord g + \ord y)( \deg h - 1 )  +  \ord \{ g y \beta c_k \}
\notag
\\
&\leq& D + (C M + M ) (\deg h - 1) + C M - kN
\notag
\\
&=&  D +  ((C  + 1 ) (\deg h - 1) + C ) M - kN.
\notag
\end{eqnarray}
Suppose
$$
\theta < \min \Big{\{} \frac{1}{ (C  + 1 ) (\deg h - 1) + C + 1}, \frac{1}{C+1} \Big{ \} }.
$$
Then for $N$ sufficiently large in terms of $D$, 
we obtain from above that $ \ord \{ \beta h(x) \} \leq -M$, which is a contradiction.
\end{proof}


\begin{proof}[Proof of Theorem \ref{second theorem}]
Let $\beta_1, ..., \beta_L$ be arbitrary elements in $\mathbb{K}_{\infty}$.
Let $M = \lfloor \theta N \rfloor + 1$
and $\theta$ be a sufficiently small positive number to be
chosen later. To obtain contradiction, suppose
for some $N$ sufficiently large in terms of $\mathcal{K}$, $q$ and $\theta$, we have
$$
\ord \{ \beta_1 h_1(x) + ... + \beta_L h_L(x) \} > -M
$$
for all $x \in \mathbb{G}_N$.

Let $\mathcal{T}$ and $g_1, ..., g_L$ be the matrix and polynomials, respectively, obtained by applying
Lemma \ref{cor to matrix lemma} to the polynomials $h_1, ..., h_L$.
We also have by $(3)$ and $(4)$ of Lemma \ref{cor to matrix lemma} that
there exist $T_j \in \mathcal{K}^* \ (1 \leq j \leq L)$ such that $[g_i]_{T_j} = 0$ if $i \not = j$,
and $\gamma_j \in \mathbb{K}_{\infty} \ (1 \leq j \leq L)$ such that
$$
\beta_1 h_1(u) + ... + \beta_L h_L(u) = \gamma_1 g_1(u) + ... + \gamma_L g_L(u).
$$
Hence for all $x \in \mathbb{G}_N$,
we have
\begin{equation}
\label{contradiction in second thm}
\ord \{ \gamma_1 g_1(x) + ... + \gamma_L g_L(x) \}   > -M.
\end{equation}
Then, by Lemma \ref{first lemma}, there exists $y \in \mathbb{G}_M \backslash \{ 0 \}$ with
$$
\Big{|} \sum_{x \in \mathbb{G}_N}  e(y \gamma_1 g_1(x) + ... + y \gamma_L g_L(x)) \Big{|} \geq \frac{q^N}{q^M-1}> q^{N-M}.
$$
Let $f(u) = y \gamma_1 g_1(u) + ... + y \gamma_L g_L(u)$, and suppose it is supported on $\mathcal{\widehat{K}} \subseteq \mathbb{Z}^+$.
We can verify that each $T_j \in (\mathcal{\widehat{K}})^*$.
Applying Corollary \ref{Cor LL main} with $f(u)$, 
we obtain that for $\theta < c$
there exists $g \in \mathbb{F}_q[t]$ such that
$\ord g < C M$ and
\begin{equation}
\label{inequality in second thm}
\ord \{ g [f]_{T_j} \} = \ord \{ g y \gamma_j [g_j]_{T_j} \} \leq C M - T_j N \ (1 \leq j \leq L),
\end{equation}
for some constants $c, C > 0$ depending only on $\mathcal{K}$ and $q$.

Let $v = g y \prod_{j=1}^L [g_j]_{T_j}$ and let $D$ be some constant dependent only
on $g_1, ..., g_L$.
Consequently, $D$ is dependent only on $h_1, ..., h_L$.
Note the actual value of $D$ may vary from line to line during calculations.
Then $$ \ord  v  \leq D + \ord g y  \leq D + C M + M,$$
and we make sure $\ord v < N$ by taking $N$ sufficiently large with respect to $D$.
Thus for all $1 \leq j \leq L$, we have
\begin{eqnarray}
\notag
\ord \{ v \gamma_j \}   &=& \min_{ z \in \mathbb{F}_q[t] } \ord (  v \gamma_j - z )
\\
&\leq& \min_{ z \in \mathbb{F}_q[t] } \ord \left( v \gamma_j -  z \prod_{i \not = j}[g_i]_{T_i} \right)
\notag
\\
&=& \min_{ z \in \mathbb{F}_q[t] } \ord \left( \prod_{i \not = j}[g_i]_{T_i} \right) +  \ord \left( g y \gamma_j [g_j]_{T_j} -  z \right)
\notag
\\
&=& \ord \left( \prod_{i \not = j}[g_i]_{T_i} \right) + \min_{ z \in \mathbb{F}_q[t] }  \ord \left( g y \gamma_j [g_j]_{T_j} -  z \right)
\notag
\\
&=& \sum_{i \not = j} \ord \left( [g_i]_{T_i} \right) + \ord \{  g y \gamma_j [g_j]_{T_j} \}
\notag
\\
&\leq& D + C M  - T_j N,
\label{lots of inequalities 1}
\end{eqnarray}
where we used ~(\ref{inequality in second thm}) to obtain the last inequality.
It follows that if we
let $a_j = ( v \gamma_j - \{ v \gamma_j \} ) \in \mathbb{F}_q[t]$, then we have
$$ \ord \left( \gamma_j - \frac{a_j}{v} \right) \leq D + C M - T_j N - \ord v \ \ (1 \leq j \leq L).$$
Recall each $g_j$ is a linear combination over $\mathbb{F}_q[t]$ of $h_1, ..., h_L$.
Thus by the hypothesis, we know there exists $n \in \mathbb{G}_{\ord v}$ such that
$$
a_1 g_1(n) + ... + a_L g_L(n) \equiv 0 \ (\text{mod } v).
$$
Clearly, we have $\max_{ 1\leq j \leq L} \deg g_j \leq \max_{ 1\leq j \leq L} \deg h_j$. Thus we obtain
\begin{eqnarray}
\ord \Big{ \{ } \sum_{j=1}^L \gamma_j g_j(n) \Big{ \} }
&\leq&
\ord \left( \sum_{j=1}^L \gamma_j g_j(n) - \frac{1}{v}  \sum_{j=1}^L a_j g_j(n) \right)
\notag
\\
&=&
\ord \left( \sum_{j=1}^L \left( \gamma_j - \frac{a_j}{v}  \right) g_j(n) \right)
\notag
\\
&\leq&
\max_{ 1\leq j \leq L} \ord \left( \left( \gamma_j - \frac{a_j}{v}  \right) g_j(n) \right)
\notag
\\
&\leq&
\max_{ 1\leq j \leq L} C M - T_j N + (\ord v)( \deg g_j - 1) + D
\notag
\\
&\leq&
\max_{ 1\leq j \leq L} C M - T_j N + (CM + M)( \deg g_j - 1) + D
\notag
\\
&\leq&
-N  +   C M + (C + 1) \left( \max_{ 1\leq j \leq L} \deg h_j - 1 \right) M + D.
\notag
\end{eqnarray}
Suppose $\theta$ is sufficiently small in terms of $C$ and $\max_{ 1\leq j \leq L} \deg h_j$.
Then it is not too difficult to see that the final quantity obtained above is
less than or equal to $-M$ for $N$ sufficiently large, which
contradicts ~(\ref{contradiction in second thm}).
Therefore, there exists some $m \in \mathbb{G}_N$ such that
$$
\ord \{ \beta_1 h_1(m) + ... + \beta_L h_L(m)  \} \leq -M \leq - \theta N.
$$
\end{proof}

Recall from Section \ref{prelim} that corresponding to any system of polynomials
$(h_1, ..., h_L)$ satisfying Condition $ ( \star ) $, we can associate a sequence $(r_x)_{ x \in \mathbb{F}_q[t] \backslash \{ 0 \} }$
such that ~(\ref{r_d's}) is satisfied. 
We prove the following theorem.
\begin{thm}
\label{third theorem}
Let $h_j \in \mathbb{F}_{q}[t] [u]$ be supported on a set $\mathcal{K}_j \subseteq \mathbb{Z}^+ \ (1 \leq j \leq L)$, and
let $\mathcal{K} = \mathcal{K}_1 \cup ... \cup \mathcal{K}_L$.
Suppose the system $( h_j )_{j=1}^L$ satisfies Condition $ ( \star ) $ and that
the maximal $\mathcal{K}^*$-portion of $( h_j )_{j=1}^L$ is linearly independent.
Then there exist $\theta = \theta(\mathcal{K},q, \max_{1 \leq j \leq L} \deg h_j ), \sigma = \sigma(h_1, ..., h_L) > 0$ and
$N_0 = N_0(\mathcal{K}, q, \theta, \sigma, h_1, ..., h_L) \in \mathbb{Z}^+$ such that the following holds when $N > N_0$.
Given any $d \in \mathbb{F}_q[t]$ with $\ord d < \lfloor \sigma N \rfloor$, and $\beta_1, ..., \beta_L$ in $\mathbb{K}_{\infty}$,
there exists $n \in \mathbb{G}_N$ such that
$n \equiv r_d \ (\text{mod } d)$ and
$$
\ord \{ \beta_1 h_1(n) + ... + \beta_L h_L(n) \} \leq -\theta N.
$$
\end{thm}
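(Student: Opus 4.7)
The plan is to adapt the proof of Theorem \ref{second theorem} by absorbing the congruence condition into a change of variable $u = dx + r_d$. Set $M = \lfloor \theta N \rfloor + 1$ and $N' = N - \ord d > (1-\sigma)N$; the elements $n \in \mathbb{G}_N$ with $n \equiv r_d \pmod d$ are exactly those of the form $n = dx + r_d$ for $x \in \mathbb{G}_{N'}$. Assuming for contradiction that $\ord \{\sum_j \beta_j h_j(dx + r_d)\} > -M$ for every $x \in \mathbb{G}_{N'}$, the goal is to derive a quantitative impossibility.

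First I would apply Lemma \ref{second cor to matrix lemma} with $s = r_d$ to replace $(h_1(du+r_d),\dots,h_L(du+r_d))$ by polynomials $g_1,\dots,g_L$ supported on $\mathcal{S}(\mathcal{K})$, together with maximal exponents $T_j \in \mathcal{K}^*$ and scalars $\gamma_j \in \mathbb{K}_\infty$ satisfying $\sum \beta_j h_j(dx+r_d) = \sum \gamma_j g_j(x)$, $[g_j]_{T_j} = \tilde{c}_j d^{T_j}$, and $[g_i]_{T_j} = 0$ when $i \neq j$. Lemma \ref{first lemma} then supplies $y \in \mathbb{G}_M \setminus \{0\}$ with $\bigl|\sum_{x \in \mathbb{G}_{N'}} e(y \sum_j \gamma_j g_j(x))\bigr| > q^{N'-M}$, and Corollary \ref{Cor LL main} applied to $f(u) = y \sum \gamma_j g_j(u)$ yields some $g \in \mathbb{F}_q[t]$ of small order together with the bounds $\ord \{g y \gamma_j [g_j]_{T_j}\} \leq -T_j N' + \epsilon N' + CM$ for each relevant index $j$. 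Here one uses that $T_j$ remains in $(\widehat{\mathcal{K}})^*$ (the $\ast$-portion of the support of $f$) because $\mathcal{S}(\mathcal{S}(\mathcal{K})) = \mathcal{S}(\mathcal{K})$.

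The key step is the choice of the witness point. Set $v = g y \prod_j [g_j]_{T_j}$; because each $[g_j]_{T_j}$ is a multiple of $d^{T_j}$ and $T_j \geq 1$, we have $d \mid v$. Let $a_j = v\gamma_j - \{v\gamma_j\} \in \mathbb{F}_q[t]$ and take $n := r_v$, drawn from the compatible system of roots constructed in Section \ref{prelim} from the common divisor provided by Lemma \ref{joint to single}. The compatibility $r_v \equiv r_d \pmod d$ (which requires $d \mid v$) ensures $n \equiv r_d \pmod d$, while $h_j(n) \equiv 0 \pmod v$ for every $j$; writing $n = dx + r_d$ and using that each $g_j$ is an $\mathbb{F}_q[t]$-linear combination of the $h_i(du+r_d)$'s forces $g_j(x) \equiv 0 \pmod v$ as well. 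Then, with $K = \max_j \deg h_j$ and property (3) of Lemma \ref{second cor to matrix lemma} in hand,
$$
\ord \Bigl\{ \sum \gamma_j g_j(x) \Bigr\} \leq \ord\!\left( \sum (\gamma_j - a_j/v)\, g_j(x) \right) \leq \max_j \bigl( \ord\{v\gamma_j\} - \ord v + K \ord n + D \bigr),
$$
which after substituting the bounds from Corollary \ref{Cor LL main}, the inequality $\ord n < \ord v$, and $\ord v \leq \bigl(\sum_j T_j\bigr)\ord d + CM + M + D$, becomes at most $-M$ provided $\theta$, $\sigma$, $\epsilon$ are taken small enough. This contradicts the standing assumption and proves the theorem.

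The principal obstacle is the extra factor of $d^{\sum T_j}$ sitting inside $v$, which is absent from the proof of Theorem \ref{second theorem}. It inflates $\ord v$ by $\bigl(\sum_j T_j\bigr) \ord d \leq \bigl(\sum_j T_j\bigr) \sigma N$, and because the final estimate multiplies $\ord v$ by $(K-1)$ through bounding $\ord g_j(n)$, the parameter $\sigma$ must be chosen small relative to $1/((K-1)\sum_j T_j)$ so that the gain $-T_j N' \leq -(1-\sigma)N$ dominates; afterwards $\epsilon$ is taken small from Corollary \ref{Cor LL main} and $\theta$ is chosen small relative to $C$, $K$ and $\sum_j T_j$ to close the argument. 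Verifying that all these quantities assemble coherently into the stated parameter dependencies, and in particular that $d \mid v$ is preserved throughout, is the main bookkeeping challenge.
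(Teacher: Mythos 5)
Your proposal is correct and follows essentially the same route as the paper's own proof: the substitution $u = du + r_d$ handled via Lemma \ref{second cor to matrix lemma}, the modulus $v = gy\prod_j [g_j]_{T_j}$ with $d \mid v$ coming from $[g_j]_{T_j} = \tilde{c}_j d^{T_j}$, the witness $n = r_v$ from the compatible system of roots, and the same parameter bookkeeping forcing $\sigma$ small against $T' = \sum_j T_j$ and $H$. The only differences (summing over $\mathbb{G}_{N-\ord d}$ rather than $\mathbb{G}_{\lfloor(1-\sigma)N\rfloor}$, and carrying $\eta = M$ instead of $(\sigma+\theta)N$ into Corollary \ref{Cor LL main}) are cosmetic.
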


Before we prove the theorem, we give an example of a system $(h_1, h_2) \subseteq \mathbb{F}_5[t][u]$ that satisfies
the hypothesis of Theorem \ref{third theorem}, where $h_1(u)$ and $h_2(u)$ do not share a common root in $\mathbb{F}_5[t]$.

\begin{example}
\label{example three}
Let $p=5$, and consider $h_1(u) = (u+1)(u^2 - t)(u^2 - (t+1))(u^2 - t(t+1))$ and $h_2(u) = u^{20} (u^2 - t)(u^2 - (t+1))(u^2 - t(t+1))$
in $\mathbb{F}_5[t][u]$. It is clear that $h_1(u)$ and $h_2(u)$ do not share a common root in $\mathbb{F}_5[t]$.
It follows from Example \ref{example two} and Lemma \ref{joint to single} that the system $(h_1, h_2)$
satisfies Condition $ ( \star ) $. 
The polynomials $h_1(u)$ and $h_2(u)$ are supported on $\mathcal{K}_1 = \{ 7, 6, 5, 4, 3, 2, 1 \}$
and $\mathcal{K}_2 = \{ 26, 24, 22, 20 \}$, respectively. We can verify that
$$
h_1^{\text{max }}(u) = u^7 \ \ \text{  and  }  \ \  h_2^{\text{max }}(u) = u^{26} - (t^2 + 3t + 1) u^{24},
$$
which are clearly linearly independent over $\mathbb{K}$.
Therefore, the maximal $\mathcal{K}^*$-portion of $(h_1,h_2)$ is linearly independent, and it follows that
$(h_1, h_2)$ satisfies the hypothesis of Theorem \ref{third theorem}. It is also easy to see that $(h_1,h_2)$
satisfies the hypothesis of Theorem \ref{second theorem}.
\end{example}

\begin{proof}
Let $\beta_1, ..., \beta_L$ be arbitrary elements in $\mathbb{K}_{\infty}$ and
$d$ be an arbitrary element in $\mathbb{G}_{\lfloor \sigma N \rfloor}$.
Let $\theta$ and $\sigma$ be positive real numbers sufficiently small to be chosen later,
and let $M = \lfloor \theta N \rfloor + 1$.
Suppose for some $N$ sufficiently large in terms of $\mathcal{K}$, $q$, $\sigma$, and $\theta$, we have
$$
\ord \{  \beta_1 h_1(d x + r_d) + ... +  \beta_L h_L(d x + r_d) \} > - M
$$
for all $x \in \mathbb{G}_{\lfloor (1 - \sigma) N \rfloor}$.
Let $\mathcal{T}$ and $g_1, ..., g_L$ be the matrix and polynomials, respectively, obtained by applying
Lemma \ref{second cor to matrix lemma} to the polynomials $h_1, ..., h_L$ with $s = r_d$ and $d$.
By $(4)$ of Lemma \ref{second cor to matrix lemma}, we know there exist $T_j \in \mathcal{K}^* \ (1 \leq j \leq L)$
such that $T_j$ is maximal in $\mathcal{K}$, $[g_i]_{T_j} = 0$ if $i \not = j$,
and $[g_j]_{T_j} = \tilde{c}_j d^{T_j}$ for some $\tilde{c}_j \in \mathbb{F}_q[t]$
dependent only on $h_1, ..., h_L$.
We also know there exist $\gamma_j \in \mathbb{K}_{\infty} \ (1 \leq j \leq L)$ such that
$$
\beta_1 h_1(d u + r_d) + ... + \beta_L h_L(d u + r_d) = \gamma_1 g_1(u) + ... + \gamma_L g_L(u).
$$
Thus we have
\begin{equation}
\label{contradiction in third thm}
\ord \{  \gamma_1 g_1(x) + ... +  \gamma_L g_L(x) \} > - M
\end{equation}
for all $x \in \mathbb{G}_{\lfloor (1 - \sigma) N \rfloor}.$
By Lemma $\ref{first lemma}$, there exists $y \in \mathbb{G}_{M} \backslash \{ 0 \}$ such that
$$
\Big{|} \sum_{x \in \mathbb{G}_{\lfloor (1 - \sigma) N \rfloor}  }  e(y \gamma_1 g_1(x) + ... + y \gamma_L g_L(x)) \Big{|}
\geq \frac{q^{\lfloor (1 - \sigma) N \rfloor}}{q^{ M}-1}
>
q^{N - (\sigma + \theta)N }.
$$
Let $f(u) = y \gamma_1 g_1(u) + ... + y \gamma_L g_L(u)$, and suppose it is supported on $\mathcal{\widehat{K}} \subseteq \mathbb{Z}^+$.
We can verify that each $T_j \in (\mathcal{\widehat{K}})^*$.
Applying Corollary \ref{Cor LL main} with $f(u)$, 
we obtain that for $(\sigma + \theta) < c$
there exists $g \in \mathbb{F}_q[t]$ such that
$\ord g < C (\sigma + \theta) N$ and
\begin{equation}
\label{inequality of third theorem}
\ord \{ g [f]_{T_j} \} = \ord \{ g y \gamma_j [g_j]_{T_j} \} \leq C (\sigma + \theta) N - T_j \lfloor (1 - \sigma) N \rfloor \ \ (1 \leq j \leq L),
\end{equation}
for some constants $c, C > 0$ depending only on $\mathcal{K}$ and $q$.
Let $v = g y \prod_{j=1}^L [g_j]_{T_j}$ and let $D$ be some constant dependent only
on $h_1, ..., h_L$ (note the actual value of $D$ may vary from line to line during
calculations). We define $T' = \sum_{1 \leq j \leq L } T_j.$
Then
$$
\ord v  \leq \ord  g y  + T' \ord d + D \leq C (\sigma + \theta) N + M + T' \lfloor \sigma N \rfloor + D.
$$
In particular, we have $\ord v < \lfloor (1 - \sigma) N \rfloor$ for $N$ sufficiently large with respect to $D$
and $\theta, \sigma$ sufficiently small.

For simplicity denote $n = r_v \in \mathbb{G}_{ \ord v}$,
then $h_j(n)$ is divisible by $v$ for any $1 \leq j \leq L$.
We also have $n \equiv r_d \ (\text{mod }d)$, because $d | v$.
Each $g_j(u)$ can be written
as an $\mathbb{F}_q[t]$-linear combination of the polynomials $h_1(d u + r_d), ..., h_L(d u + r_d)$. Thus if we write $n = d w + r_d$
for some $w \in \mathbb{G}_{\ord v}$, then $g_j(w)$ is divisible by $v$
for any $1 \leq j \leq L$.
Let $H = \max_{1 \leq j \leq L} \deg h_j$.
Then it follows that
\begin{eqnarray}
\notag
\ord \{  \gamma_j g_j(w)  \} &\leq& \min_{z \in \mathbb{F}_q[t]} \ord \left( \gamma_j g_j(w) - z \frac{g_j(w)}{v} \right)
\\
&=& \ord \left( \frac{g_j(w)}{v} \right) + \min_{z \in \mathbb{F}_q[t]} \ord \left( v \gamma_j - z \right)
\notag
\\
&=&
\ord \left( \frac{g_j(w)}{v} \right) + \ord \{ v \gamma_j \}
\notag
\\
&\leq&
D + H (\ord d + \ord w) -\ord v + \ord \{ v \gamma_j \},
\label{lots of inequalities 2}
\end{eqnarray}
where the last inequality is obtained via $(3)$ of Lemma \ref{second cor to matrix lemma}.
We also have by similar calculations as in ~(\ref{lots of inequalities 1}) that for $1 \leq j \leq L$,
\begin{eqnarray}
\notag
\ord \{ v \gamma_j \}   &=& \min_{ z \in \mathbb{F}_q[t] } \ord (  v \gamma_j - z )
\\
&\leq& \min_{ z \in \mathbb{F}_q[t] } \ord \left( v \gamma_j -  z \prod_{i \not = j}[g_i]_{T_i} \right)
\notag
\\
&=& \ord \left( \prod_{i \not = j}[g_i]_{T_i} \right) + \min_{ z \in \mathbb{F}_q[t] }  \ord \left( g y \gamma_j [g_j]_{T_j} -  z \right)
\notag
\\
&=& \sum_{i \not = j} \ord \left( [g_i]_{T_i} \right) + \ord \{  g y \gamma_j [g_j]_{T_j} \}
\notag
\\
&=& \left( \sum_{i \not = j} \ord \tilde{c}_i + T_i \ \ord d  \right) + \ord \{  g y \gamma_j [g_j]_{T_j} \}
\notag
\\
&\leq& T' \ord d + D + C (\sigma + \theta) N  - T_j \lfloor (1 - \sigma) N \rfloor,
\label{lots of inequalities 3}
\end{eqnarray}
where we used ~(\ref{inequality of third theorem}) to obtain the last inequality.
Therefore, we have by ~(\ref{ord fractional part}), ~(\ref{lots of inequalities 2}), and ~(\ref{lots of inequalities 3}) that
\begin{eqnarray}
&&\ord \Big{ \{ } \sum_{j=1}^L \gamma_j g_j(w) \Big{ \} }
\notag
\\
&\leq&
\max_{1 \leq j \leq L} \ord \{  \gamma_j g_j(w)  \}
\notag
\\
&\leq&
T' \ord d + D + C (\sigma + \theta) N  - \lfloor (1 - \sigma) N \rfloor \min_{ 1\leq j \leq L} T_j + H (\ord d + \ord w) -\ord v
\notag
\\
&\leq&
\sigma T' N + D + C (\sigma + \theta) N  - \lfloor (1 - \sigma) N \rfloor \min_{ 1\leq j \leq L} T_j + H (\sigma N + \ord v) - \ord v
\notag
\\
&\leq&
\sigma T' N + D + C (\sigma + \theta) N  - \lfloor (1 - \sigma) N \rfloor +  \sigma H N + H ( C(\sigma + \theta) N + M + \sigma T' N).
\notag
\end{eqnarray}
Suppose $\theta$ is sufficiently small
in terms of $C$ and $H$, and also that $\sigma$ is sufficiently small
in terms of $C$, $T'$ and $H$. Then for $N$ sufficiently large, the final quantity
obtained above is less than or equal to $- M$, which contradicts ~(\ref{contradiction in third thm}).
Therefore, there exists $x \in \mathbb{G}_{\lfloor (1 - \sigma) N \rfloor}$ such that
$ m = d x + r_d \in \mathbb{G}_{N}$ and
$$
\ord \{ \beta_1 h_1(m) + ... + \beta_L h_L(m)  \} \leq -M \leq - \theta N.
$$
\end{proof}

\appendix
\section{} \label{A}

In this section, we prove the statement presented in Example \ref{example two}.
We let $p=5$ and let $h(u) = (u^2 - t)(u^2 - (t+1)) (u^2 - (t^2+t)) \in \mathbb{F}_{5}[t][u]$.
First, since $\mathbb{F}_{5}[t]$ is a unique factorization domain, it is clear that $h(u)$
has a root in $\mathbb{F}_{5}[t]$ if and only if at least one of $u^2 - t, u^2 - (t+1)$, and  $u^2 - (t^2+t)$
has a root in $\mathbb{F}_{5}[t]$. It can be verified easily that none of the three polynomials have
a root in $\mathbb{F}_{5}[t]$. Therefore, $h(u)$ does not have a root in $\mathbb{F}_{5}[t]$.

In order to prove that $h(u)$ has a root modulo $g$ for every $g$ in $\mathbb{F}_5[t] \backslash \{ 0\}$,
we use the following version of the Hensel's lemma.
\begin{lem}\textnormal{\cite[Lemma 25]{RMK}} 
Suppose $\text{char }({\mathbb{F}}_q) = p \nmid K$, and we have $w, z_0, z \in \mathbb{F}_q[t]$, where $w$ is irreducible, $w \nmid z_0$, and
$$
z^K \equiv z_0 \ (\text{mod } w^A).
$$
Then, for $B>A$, there exists $z' \in \mathbb{F}_q[t]$ such that
$$
(z')^K \equiv z_0 \ (\text{mod } w^B) \ \ \text{ and } \ \  z' \equiv z \ (\text{mod } w^A).
$$
\end{lem}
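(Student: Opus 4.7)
The plan is to prove this $\mathbb{F}_q[t]$-analogue of Hensel's lemma by the classical one-prime-at-a-time lifting, inducting on $B-A$. It suffices to handle a single step $A \rightsquigarrow A+1$: repeated application then produces a chain $z = z^{(0)}, z^{(1)}, \dots, z^{(B-A)} = z'$ with $(z^{(i)})^K \equiv z_0 \ (\text{mod } w^{A+i})$ and $z^{(i)} \equiv z \ (\text{mod } w^A)$ at every stage, giving the claimed $z'$ at the end.

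For the single lifting step, I would look for $z'$ of the form $z' = z + w^A y$ with $y \in \mathbb{F}_q[t]$ to be determined. Expanding with the binomial theorem,
$$
(z + w^A y)^K = z^K + K z^{K-1} w^A y + \sum_{i=2}^{K} \binom{K}{i} z^{K-i} (w^A y)^i,
$$
and since $A \geq 1$ the tail sum is divisible by $w^{2A}$, hence vanishes modulo $w^{A+1}$. Writing $z^K - z_0 = w^A r$ for some $r \in \mathbb{F}_q[t]$, the congruence $(z')^K \equiv z_0 \ (\text{mod } w^{A+1})$ reduces after cancelling $w^A$ to the linear congruence
$$
r + K z^{K-1}\, y \equiv 0 \ (\text{mod } w).
$$

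The next step is to solve this for $y$ in the residue field $\mathbb{F}_q[t]/(w)$. Since $w$ is irreducible, this quotient is a field, so it is enough to check that $K z^{K-1}$ is a unit modulo $w$. The hypothesis $p \nmid K$ makes $K$ nonzero in characteristic $p$, hence nonzero in $\mathbb{F}_q[t]/(w)$; and from $z^K \equiv z_0 \ (\text{mod } w)$ (implied by $A \geq 1$) together with $w \nmid z_0$, we deduce $w \nmid z$. Therefore $K z^{K-1}$ is invertible, a suitable $y$ exists in $\mathbb{F}_q[t]$, and taking $z' = z + w^A y$ closes the base step.

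The main obstacle, and the one point where the argument uses anything beyond formal manipulation, is precisely this invertibility of $K z^{K-1}$ modulo $w$: both hypotheses ($p \nmid K$ and $w \nmid z_0$) feed into it, and it is the characteristic-$p$ analogue of the familiar $p \nmid f'(a)$ condition in Hensel's lemma over $\mathbb{Z}_p$. Once the single step is in place, the inductive step with $A$ replaced by $A+i$ and $z$ by $z^{(i)}$ is structurally identical, since the non-divisibility $w \nmid z^{(i)}$ is preserved throughout the lifting (as $(z^{(i)})^K \equiv z_0 \ (\text{mod } w)$ still holds), so no further obstacle arises.
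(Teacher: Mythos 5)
Your proof is correct. The paper itself gives no argument for this lemma --- it is quoted verbatim from Kubota \cite[Lemma~25]{RMK} and used as a black box in the appendix --- and your one-step Newton/Hensel lifting $z' = z + w^A y$, solving the linear congruence $r + Kz^{K-1}y \equiv 0 \pmod{w}$ in the field $\mathbb{F}_q[t]/(w)$ using $p \nmid K$ and $w \nmid z$, is exactly the standard argument one would expect the cited source to contain; the only point worth flagging is that the statement implicitly requires $A \geq 1$, which you correctly use both to kill the tail of the binomial expansion modulo $w^{A+1}$ and to deduce $w \nmid z$ from $w \nmid z_0$.
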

By the quadratic reciprocity law in $\mathbb{F}_5[t]$, given any irreducible $\pi \in \mathbb{F}_5[t]$ that is not $t$ or $t+1$,
we know that either $t^2+t$ is a quadratic residue modulo $\pi$, or one of $t$ and $t+1$ is a quadratic residue modulo $\pi$.
Suppose $\pi = t+1$, then we have
$$
2^2 \equiv -1 \equiv t \ (\text{mod }\pi).
$$
On the other hand, if $\pi = t$, then we have
$$
1^2 \equiv 1 \equiv t+1 \ (\text{mod }\pi).
$$
By the Hensel's lemma above, it follows that given any $L \geq 1$, one of $u^2 - t$,  $u^2 - (t+1)$,
and $u^2 - (t^2 + t)$ has a root modulo $\pi^L$. In other words, $h(u)$ has a root modulo $\pi^L$.
It then follows from the Chinese Remainder Theorem that $h(u)$ has a root modulo $g$ for every $g$ in $\mathbb{F}_5[t] \backslash \{ 0\}$.

\section*{Acknowledgement} This paper was part of my PhD thesis. I would like to thank Professor Yu-Ru Liu, Professor Th\'{a}i Ho\`{a}ng L\^{e}, and 
Professor Craig V. Spencer for suggesting this problem to me. I would also like to thank the referee for many helpful comments and suggestions.

\end{document}